\documentclass[a4paper,11pt,reqno,noindent]{amsart}
\usepackage[centertags]{amsmath}
\usepackage{amsfonts,amssymb,amsthm,amscd,dsfont,cases,esint,enumerate,newlfont,color}
\usepackage[T1]{fontenc}
\usepackage[english]{babel}
\usepackage[applemac]{inputenc}
\usepackage[body={15cm,21.5cm},centering]{geometry} 
\usepackage{fancyhdr}
\pagestyle{fancy}
\fancyhf{}

\fancyhead[RO,LE]{\footnotesize\thepage}
\fancyhead[LO]{\scriptsize\rightmark}
\fancyhead[RE]{\scriptsize\leftmark}

\setlength{\headheight}{12pt}  
\setlength{\headsep}{25pt} 

\theoremstyle{plain}
\newtheorem{theor10}{Theorem}
\newenvironment{theor1}
  {\pushQED{\qed}\begin{theor10}}
  {\popQED\end{theor10}}
\newtheorem{prop10}{Proposition}

\newtheorem{theor0}{Theorem}[section]

\newtheorem{lem0}[theor0]{Lemma}

\newtheorem{prop0}[theor0]{Proposition}
\newenvironment{prop}
  {\pushQED{\qed}\begin{prop0}}
  {\popQED\end{prop0}}
\theoremstyle{definition}
\newtheorem{defin0}[theor0]{Definition}

\newtheorem{rems0}[theor0]{Remarks}

\newtheorem{rem0}[theor0]{Remark}
\newenvironment{rem}
  {\pushQED{\qed}\begin{rem0}}
  {\popQED\end{rem0}}

\mathchardef\emptyset="001F
\numberwithin{equation}{section}

\newcommand{\e}{\varepsilon}

\newcommand{\Lc}{\mathcal{L}}
\newcommand{\Cc}{\mathcal{C}}
\newcommand{\calC}{\mathcal{C}}
\newcommand{\Kc}{\mathcal K}
\newcommand{\Ec}{\mathcal E}
\newcommand{\R}{\mathbb R}
\newcommand{\Ic}{\mathcal I}
\newcommand{\Mk}{\mathfrak M}
\newcommand{\Md}{\mathbb M}
\newcommand{\Nc}{\mathcal N}
\newcommand{\cvf}{\rightharpoonup}
\newcommand{\w}{\omega}
\newcommand{\loc}{{\operatorname{loc}}}
\newcommand{\Id}{\operatorname{Id}}
\newcommand{\E}{\mathbb{E}}

\newcommand{\dist}{\operatorname{dist}}
\newcommand{\D}{\operatorname{D}}

\newcommand{\bb}{\bar{\boldsymbol b}}
\newcommand{\Bb}{\bar{\boldsymbol B}}
\newcommand{\adh}{{\operatorname{adh}}}
\newcommand{\Ld}{\operatorname{L}}
\newcommand{\Div}{\operatorname{div}}
\newcommand{\Sym}{{\operatorname{sym}}}
\newcommand{\Skew}{{\operatorname{skew}}}
\newcommand{\Tr}{\operatorname{tr}}
\newcommand{\step}[1]{\noindent \textit{Step} #1.}
\newcommand{\substep}[1]{\noindent \textit{Substep} #1.}
\newcommand{\Pm}{\mathbb{P}}
\newcommand{\pr}[1]{\mathbb{P}\left[ #1 \right]}

\newcommand{\expec}[1]{\mathbb{E}\left[ #1 \right]}
\newcommand{\expecm}[1]{\mathbb{E}\big[ #1 \big]}

\usepackage[colorlinks,citecolor=black,urlcolor=black]{hyperref}

\title[Effective viscosity of colloidal suspensions]{Corrector equations in fluid mechanics:\\Effective viscosity of colloidal suspensions}

\author[M. Duerinckx]{Mitia Duerinckx}
\author[A. Gloria]{Antoine Gloria}
\address[Mitia Duerinckx]{Universit\'e Paris-Saclay, CNRS, Laboratoire de Math\'ematiques d'Orsay, 91405 Orsay, France \& Universit\'e Libre de Bruxelles, Département de Mathématique, 1050 Brussels, Belgium}
\email{mduerinc@ulb.ac.be}
\address[Antoine Gloria]{Sorbonne Universit\'e, CNRS, Universit\'e de Paris, Laboratoire Jacques-Louis Lions, 75005 Paris, France \& Universit\'e Libre de Bruxelles, Département de Mathématique, 1050 Brussels, Belgium}
\email{gloria@ljll.math.upmc.fr}

\begin{document}
\selectlanguage{english}

\maketitle

\begin{abstract}
Consider a colloidal suspension of rigid particles in a steady Stokes flow.
In a celebrated work, Einstein argued that in the regime of dilute particles the system behaves at leading order like a Stokes fluid 
with some explicit effective viscosity.
In the present contribution, we rigorously define a notion of effective viscosity, regardless of the
dilute regime assumption. More precisely, we establish a homogenization result when particles are distributed according to a given stationary and ergodic random point process. The main novelty is the introduction and analysis of suitable corrector equations.
\end{abstract}

\setcounter{tocdepth}{1}
\tableofcontents

\section{Introduction and main results}

\subsection{General overview}

This article is devoted to the large-scale behavior of the steady Stokes equation for a fluid with a dense colloidal suspension of small rigid particles that are randomly distributed. The fluid and the particles interact via the action-reaction principle, and satisfy a no-slip condition
at the particle boundaries.
Suspended particles then act as obstacles, hindering the fluid flow and therefore increasing the flow resistance, that is, the viscosity. The system is naturally expected to behave on large scales approximately like a Stokes fluid with some effective viscosity.
Our main result in this contribution makes this statement precise and rigorously defines the effective viscosity in terms of a stochastic homogenization result.

\medskip\noindent
Let us first describe previous contributions on the topic, and emphasize our main motivation.
In his PhD thesis, Einstein~\cite{Einstein-06} was the first to analyze this effective viscosity problem: focussing on a dilute regime (that is, assuming that particles are scarce), he argued that the fluid indeed behaves at leading order like a Stokes fluid with some effective viscosity and that the latter can be explicitly computed at first order in the particle concentration in form of the so-called Einstein's formula, which played a key role in the physics community at that time as it served as a basis for Perrin's celebrated experiment to estimate the Avogadro number.
Various contributions followed, in particular going beyond the first order, e.g.~\cite{BG-72,Nunan-Keller-84,Almog-Brenner,AGKL-12}.
From a rigorous perspective, several recent contributions stand out. In~\cite{Haines-Mazzu} (see also the refined version \cite{Niethammer-Schubert-19}), Haines and Mazzucato provide
bounds on the difference between a heuristic notion of effective viscosity (defined as some integral ratio with the correct dimensionality)
and Einstein's formula. In~\cite{GVH} (see also~\cite{GVM}), G\'erard-Varet and Hillairet took another approach, considering the solution of the Stokes problem and proving its closeness to the Stokes flow associated with some effective viscosity (described at higher order than Einstein's formula) --- a quantified consistency result. In both works, for the effective behavior of a sequence of solutions, the authors establish error estimates that only get sharp in the dilute regime. 
On the one hand, the analysis in~\cite{Haines-Mazzu,Niethammer-Schubert-19,GVH,GVM} requires sophisticated arguments (reflection method, renormalized energy method, etc.) in order to get quantitative statements. On the other hand, their applicability is limited by the dilute regime assumption that allows to construct ``explicit'' approximate solutions.
In particular, the very notion of effective viscosity is not defined independently of the dilute regime.
Our main motivation is to remedy this issue by taking yet another approach and distinguishing two independent questions:
\begin{itemize}
\item the definition of an effective viscosity in full generality in the setting of homogenization theory in terms of a suitable corrector problem;
\smallskip\item the asymptotic analysis of the effective viscosity in the dilute regime --- in the spirit of the so-called Clausius-Mossotti formula for homogenization of electrostatics and linear elasticity, cf.~\cite{DG-16a}.
\end{itemize}
The present contribution answers the first question, while the second one is the object of a forthcoming work~\cite{DG-20c}.

\medskip\noindent
In a nutshell, our approach is in the pure tradition of homogenization theory.
We reformulate the problem as the study of a family of solutions of fluid mechanics equations in a perforated domain associated with the spatial rescaling of some stationary and ergodic random array of inclusions, and we prove that this family converges to the solution of some effective (deterministic) fluid mechanics equation.
Periodic homogenization in fluid mechanics is not new, dating back to S\'anchez-Palencia~\cite{Sanchez}, Tartar~\cite{Tartar-Stokes}, and Allaire~\cite{Allaire-89,Allaire-90}, to cite but a few.
We also refer to the early work of Cioranescu and Saint Jean Paulin~\cite{Cio-SJP}, where a related scalar problem is considered in form of the so-called torsion problem.
In the random setting, we refer to the contributions by Beliaev and Kozlov~\cite{Beliaev-Kozlov-96}, by Basson and G\'erard-Varet~\cite{BGV-08}, and more recently by Giunti and H\"ofer~\cite{GiuntiHofer}. We further refer to the works of Jikov~\cite{Jikov-87,Jikov-90} on the closely related homogenization problem for stiff inclusions in linear elasticity, see also~\cite[Chapter~8.6]{JKO94}.
In the present work, the homogenization result that is established in the general stationary and ergodic random framework (independently of the dilute regime) is new even in the periodic setting due to the specificity of the considered boundary conditions.

\medskip
\noindent
In terms of insight, the main novelty of this contribution is the introduction and analysis of suitable corrector equations
in a context where this had not been done before.
From a mathematical perspective, the divergence-free constraint for the fluid velocity yields technical difficulties and makes the analysis quite subtle --- although still solely based on soft,  qualitative arguments.
As usual, the proof of the homogenization result splits into two parts: the construction of correctors, 
and the convergence result using Tartar's method of oscillating test functions~\cite{Tartar-09}.
The development of a corresponding {\it quantitative} homogenization theory, which is postponed to a forthcoming work~\cite{DG-20a}, requires a suitable strong mixing condition on the particle distribution.

\medskip
\noindent
Before turning to the actual statement of the main results, let us mention that an additional motivation
stems from the sedimentation problem for rigid particles in a Stokes flow, e.g.~\cite{Batchelor-72}.
This concerns the case of particles that are heavier than the fluid, and therefore settle in the fluid.
In the corrector equation, this yields an additional force on the particles, which pumps energy into the system
and entails a crucial lack of compactness.
We refer to our very recent work~\cite{DG-20b} (see also~\cite{Gloria-19}) for a thorough discussion of the behavior of such sedimenting suspensions; although inspired by the present contribution, the analysis is much more involved and happens to require a strong mixing condition on the particle distribution even for qualitative results. Among other things, we show in~\cite{DG-20b} that the corresponding effective viscosity coincides with that for a non-sedimenting suspension, hence only depends on the geometry of the suspension.

\subsection{Main results}

Throughout, we place ourselves in dimension $d\ge2$.
We start with a suitable description of the random suspension of particles.
Let $\{x_n^\w\}_n$ denote a stationary and ergodic random point process on the ambient space~$\R^d$, constructed on a given probability space $(\Omega,\Pm)$; see Remark~\ref{rem:stat} below for a proper definition of stationarity.
Define the corresponding spherical inclusion process
\[\Ic^\w:=\bigcup_nI_n^\w,\qquad I_n^\w:=B(x_n^\w),\]
where $B(x_n^\w)$ denotes the unit ball centered at $x_n^\w$,
and assume that it satisfies the hardcore condition
\[\inf_{m\ne n}\dist(I_n^\w,I_m^\w)>\delta\quad\text{almost surely},\]
for some fixed $\delta\in(0,1)$.
Note that spherical inclusions could be replaced by random shapes under a uniform $C^2$ regularity assumption.
In addition, the deterministic lower bound on the minimal interparticle distance can be relaxed into a lower bound of the
type 
$$\E\Big[\mathds 1_{|x_n|<1}\sup_{m:m\ne n} \dist(I_n,I_m)^{-p}\Big]<\infty,$$
for some large enough power $p\ge 1$, at the price
of tracking down random constants in the proof and using Meyers-type estimates on solutions. We do however not pursue in this direction here; we believe that such conditions could be further improved, possibly in the spirit of~\cite{Jikov-90}, see also~\cite[Section~8.6]{JKO94}.

\medskip\noindent
Given a reference bounded Lipschitz domain $U$, we consider the set $\Nc_\e^\w(U)$ of all indices $n$ such that $\e(I_n^\w+\delta B)\subset U$, and we define the corresponding rescaled inclusion process $\Ic_\e^\w(U)$ in $U$,
\[\Ic_\e^\w(U):=\bigcup_{n\in\Nc_\e^\w(U)}\e I_n^\w.\]
Note that balls of this collection are at distance at least $\e\delta$ from one another and from the boundary $\partial U$. This inclusion process represents a random suspension of particles in the reference domain $U$.
We then consider these particles as suspended in a solvent described by the steady Stokes equation: the fluid velocity $u_\e^\w$ satisfies
\[-\triangle u_\e^\w+\nabla P_\e^\w=0,\qquad\Div u_\e^\w=0,\qquad\text{in $U\setminus\Ic_\e^\w(U)$},\]
and $u_\e^\w=0$ on $\partial U$. As the pressure is defined up to a constant, we choose for instance
\[\int_{U\setminus\Ic_\e^\w(U)}P_\e^\w=0.\]
Next, no-slip boundary conditions are imposed at particle boundaries; since particles are constrained to have rigid motions, this amounts to letting the velocity field~$u_\e^\w$ be extended inside particles, with the rigidity constraint
\[\D(u_\e^\w)=0,\qquad\text{in $\Ic_\e^\w(U)$},\]
where $\D(u_\e^\w)$ denotes the symmetrized gradient of $u_\e^\w$. In other words, this condition means that $u_\e^\w$ coincides with a rigid motion $V_{\e,n}^\w+\Theta_{\e,n}^\w(x-\e x_n^\w)$ inside each inclusion $\e I_n^\w$, for some $V_{\e,n}^\w\in\R^d$ and skew-symmetric matrix $\Theta_{\e,n}^\w\in\R^{d\times d}$.
Finally, assuming that the particles have the same mass density as the fluid, buoyancy forces vanish, hence
the force and torque balances on each particle take the form
\begin{gather}
\int_{\e\partial I_n^\w}\sigma(u_\e^\w,P_\e^\w)\nu=0,\label{eq:force-bal}\\
\int_{\e\partial I_n^\w}\Theta(x-\e x_n^\w)\cdot\sigma(u_\e^\w,P_\e^\w)\nu=0,\quad\text{for all $\Theta\in\Md^\Skew$},\nonumber
\end{gather}
where $\Md^\Skew\subset\R^{d\times d}$ denotes the subspace of skew-symmetric matrices,
$\sigma(u_\e^\w,P_\e^\w)$ is the usual Cauchy stress tensor,
\[\sigma(u_\e^\w,P_\e^\w)=2\D(u_\e^\w)-P_\e^\w\Id,\]
and $\nu$ stands for the outward unit normal vector at the particle boundaries.
In the physically relevant three-dimensional case $d=3$, skew-symmetric matrices $\Theta\in\Md^\Skew$ are equivalent to cross products $\theta\times$ with $\theta\in\R^3$, and equations recover their more standard form.

\medskip\noindent
In this context, modeling a dense suspension of small rigid particles in a viscous fluid with the same mass density, our homogenization result takes on the following guise.

\begin{theor1}\label{th:Stokes}
Given a bounded Lipschitz domain $U\subset\R^d$ and given a forcing $f\in\Ld^2(U)$, consider for all $\e>0$ and $\w\in\Omega$ the unique weak solution $(u_\e^\w,P_\e^\w)\in H^1_0(U)\times \Ld^2(U\setminus\Ic_\e^\w(U))$ of the Stokes problem introduced above, that is,
\begin{equation}\label{eq:Stokes}
\left\{\begin{array}{ll}
-\triangle u_\e^\w+\nabla P_\e^\w=f,&\text{in $U\setminus\Ic_\e^\w(U)$},\\
\Div u_\e^\w=0,&\text{in $U\setminus\Ic_\e^\w(U)$},\\
u_\e^\w=0,&\text{on $\partial U$},\\
\D(u_\e^\w)=0,&\text{in $\Ic_\e^\w(U)$},\\
\int_{\e\partial I_n^\w}\sigma(u_\e^\w,P_\e^\w)\nu=0,&\forall n\in\Nc_\e^\w(U),\\
\int_{\e\partial I_n^\w}\Theta(x-\e x_n^\w)\cdot \sigma(u_\e^\w,P_\e^\w)\nu=0,&\forall n\in\Nc_\e^\w(U),\,\forall\Theta\in\Md^\Skew,\\
\int_{U\setminus\Ic_\e^\w(U)} P_\e^\w=0,&
\end{array}\right.
\end{equation}
and denote by 
$\lambda:=\expec{\mathds 1_{\Ic}}$ the volume fraction of the suspension.
Then for almost all $\w$ there holds
\begin{equation*}\begin{array}{rcll}
\displaystyle u_\e^\w-\bar u&\cvf&0,\qquad&\displaystyle\text{weakly in $H^1_0(U)$,}\\
\vspace{-0.3cm}&&&\\
\displaystyle (P_\e^\w-\bar P-\bb:\D(\bar u))\mathds1_{U\setminus\Ic_\e^\w(U)}&\cvf&0,\qquad&\displaystyle\text{weakly in $\Ld^2(U)$,}
\end{array}\end{equation*}
where $(\bar u,\bar P)\in H^1_0(U)\times\Ld^2(U)$ is the unique weak solution of the homogenized Stokes flow
\begin{equation}\label{eq:Stokes-hom}
\left\{\begin{array}{ll}
-\Div2\Bb \D(\bar u)+\nabla\bar P=(1-\lambda) f,&\text{in $U$},\\
\Div\bar u=0,&\text{in $U$},\\
\bar u=0,&\text{on $\partial U$},\\
\int_U\bar P=0,&
\end{array}\right.
\end{equation}
and the effective constants are as follows:
\begin{enumerate}[\quad$\bullet$]
\item the effective diffusion tensor $\Bb$ is a positive definite symmetric linear map on symmetric trace-free matrices $\Md^\Sym_0\subset\R^{d\times d}$, and is defined for all $E\in\Md_0^\Sym$ by
\begin{equation}\label{eq:def-B}
E:\Bb E\,:=\,\expec{|\D(\psi_{E})+E|^2};
\end{equation}
\item $\bb$ is a symmetric trace-free matrix and is given for all $E\in\Md_0^\Sym$ by
\begin{equation}\label{eq:def-b}
\bb:E\,:=\,\frac1d\,\E\bigg[{\sum_n\frac{\mathds1_{I_n}}{|I_n|}\int_{\partial I_n}(x-x_n)\cdot\sigma(\psi_E+Ex,\Sigma_E)\nu}\bigg];
\end{equation}
\end{enumerate}
where $\nabla\psi_E\in\Ld^2(\Omega;\Ld^2_\loc(\R^d)^{d\times d})$ is the unique stationary gradient solution with vanishing expectation and $\Sigma_E\in\Ld^2(\Omega;\Ld^2_\loc(\R^d\setminus\Ic))$ is the unique associated stationary pressure with vanishing expectation for the following infinite-volume corrector problem, cf.~Proposition~\ref{prop:corr-Stokes}:
for almost all $\w$,
\begin{equation}\label{eq:corr-Stokes}
\left\{\begin{array}{ll}
-\triangle\psi_E^\w+\nabla \Sigma_E^\w=0,&\text{in $\R^d\setminus\Ic^\w$},\\
\Div\psi_E^\w=0,&\text{in $\R^d\setminus\Ic^\w$},\\
\D(\psi_E^\w+Ex)=0,&\text{in $\Ic^\w$},\\
\int_{\partial I_n^\w}\sigma(\psi_E^\w+Ex,\Sigma_E^\w)\nu=0,&\forall n,\\
\int_{\partial I_n^\w}\Theta(x-\e x_n^\w)\cdot\sigma(\psi_E^\w+Ex,\Sigma_E^\w)\nu=0,&\forall n,\,\forall\Theta\in\Md^\Skew.
\end{array}\right.
\end{equation}
Moreover, provided $f\in\Ld^p(U)$ for some $p>d$, for almost all~$\w$, we have a corrector result
for the velocity field,
\[\Big\|u_\e^\w-\bar u-\e\sum_{E\in\Ec}\psi_E^\w(\tfrac\cdot\e)\nabla_E\bar u\Big\|_{H^1(U)}\to0,\]
and for the pressure field,
\[\inf_{\kappa\in\R}~\Big\|P_\e^\w-\bar P-\bb:\D(\bar u)-\sum_{E\in\Ec}(\Sigma_E^\w\mathds1_{\R^d\setminus\Ic^\w})(\tfrac\cdot\e)\nabla_E\bar u-\kappa\Big\|_{\Ld^2(U\setminus\Ic_\e^\w(U))}\to0,\]
where the sums run over an orthonormal basis $\Ec$ of $\Md_0^\Sym$.
\end{theor1}

\begin{rem}[Buoyancy and sedimentation problem]\label{rem:sediment}
If particles do not have the same mass density as the solvent fluid, a nontrivial buoyancy must be taken into account in the force balance~\eqref{eq:force-bal}: denoting by $g\in C_b(U)^d$ the buoyancy, this equation is replaced by
\begin{equation}\label{eq:buoyancy}
\frac1\e \int_{\e I_n^\w}g+\int_{\e\partial I_n^\w}\sigma(u_\e^\w,P_\e^\w)\nu=0.
\end{equation}
The scaling in $\e$ is such that surface and volumetric forces have the same order uniformly in $\e$ (it is equivalent, in sedimentation experiments, to increasing the size of the tank, rather than decreasing the size of the particles).
Since an a priori diverging amount $O(\frac1\e)$ of energy is then pumped into the system, it needs to be compensated by modifying the definition of correctors~\eqref{eq:corr-Stokes}. This is fully analyzed in our companion article~\cite{DG-20a} under strong mixing conditions, where we show in particular that the effective viscosity is not affected by the settling process.
A weak sedimentation regime can however be considered as a direct adaptation of our present analysis, replacing~\eqref{eq:buoyancy} by  
\begin{equation*}
\int_{\e  I_n^\w}g+\int_{\e\partial I_n^\w}\sigma(u_\e^\w,P_\e^\w)\nu=0,
\end{equation*}
in which case the buoyancy vanishes in the limit (as the quotient of a volumetric over a surfacic term in the limit of small particles), and 
the effective equation is then obtained by adding a forcing term to~\eqref{eq:Stokes-hom}  in form of
\begin{equation*}
\left\{\begin{array}{ll}
-\Div2\Bb\D(\bar u)+\nabla\bar P=(1-\lambda)f+\lambda g,&\text{in $U$},\\
\Div\bar u=0,&\text{in $U$},\\
\bar u=0,&\text{on $\partial U$},\\
\int_U\bar P=0.&
\end{array}\right.
\end{equation*}
This simpler problem is however strictly distinct from the proper sedimentation regime.
\end{rem}

\begin{rem}[Stationary setting]\label{rem:stat}
We briefly recall  the standard formulation of the stationary setting, make precise probabilistic assumptions, and recall some useful notation and constructions for stationary random fields.
\begin{enumerate}[(i)]
\item \emph{Stationarity and probabilistic assumptions.} As is customary in stochastic homogenization theory, e.g.~\cite[Section~7]{JKO94}, stationarity is most conveniently defined via a measurable action  $\{\tau_x\}_{x\in\R^d}$ of the translation group $(\R^d,+)$ on the underlying probability space $(\Omega,\Pm)$. More precisely, the space is endowed with measurable maps $\tau_x:\Omega\to\Omega$ that satisfy
\begin{itemize}
\item $\tau_x\circ\tau_y=\tau_{x+y}$ for all $x,y\in\R^d$;
\item $\pr{\tau_x A}=\pr{A}$ for all $x\in\R^d$ and measurable $A\subset\Omega$;
\item the map $\R^d\times\Omega\to\Omega:(x,\w)\mapsto\tau_x\w$ is jointly measurable;
\end{itemize}
and this action is assumed to be ergodic in the sense that any random variable $\tilde\phi\in\Ld^1(\Omega)$ that is $\tau$-invariant (i.e., $\tilde\phi(\tau_x\cdot)=\tilde\phi$ almost surely for all $x$) is almost surely constant.
The point process $\{x_n^\w\}_n$ is then said to be stationary (with respect to $\tau$) if $\{x_n^{\tau_{x}\omega}\}_n=\{x+x_n^\omega\}_n$ for all $x,\w$.
\smallskip\item \emph{Stationary extensions.}
A function $\phi:\R^d\times\Omega\to\R$ is said to be stationary if there exists a measurable map $\tilde\phi:\Omega\to\R$ such that $\phi(x,\w)=\tilde\phi(\tau_{-x}\w)$ for all $x,\w$.
The joint measurability assumption for the action then ensures that $\phi$ is jointly measurable, which in view of a result by von Neumann is equivalent to stochastic continuity, that is, $\pr{|\phi(x+y,\cdot)-\phi(x,\cdot)|>\delta}\to0$ as $y\to0$ for all $x$ and $\delta>0$, cf.~\cite[Section~7]{JKO94}.
Stationarity then yields a bijection between random variables $\tilde\phi:\Omega\to\R$ and stationary measurable functions $\phi:\R^d\times\Omega\to\R$. The function $\phi$ is referred to as the stationary extension of the random variable~$\tilde\phi$.
The subspace of stationary functions $\phi\in\Ld^2(\Omega;\Ld^2_\loc(\R^d))$ is then identified with the Hilbert space $\Ld^2(\Omega)$, and the (spatial) weak gradient $\nabla$ on locally square integrable functions turns into a linear operator on $\Ld^2(\Omega)$. We also
define $H^s(\Omega)$ as the subspace of random variables $\tilde\phi\in\Ld^2(\Omega)$ with stationary extension $\phi\in\Ld^2(\Omega;H^s_\loc(\R^d))$. We often use the short-hand notation $\phi^\w(x):=\phi(x,\w)$.
\qedhere
\end{enumerate}
\end{rem}

\subsection*{Notation}
\begin{itemize}
\item For vector fields $u,u'$ and matrix fields $T,T'$, we set $(\nabla u)_{ij}=\nabla_ju_i$, $(\Div T)_i=\nabla_jT_{ij}$, $T:T'=T_{ij}T'_{ij}$, $(u\otimes u')_{ij}=u_iu'_j$, $(T^s)_{ij}=\frac12(T_{ij}+T_{ji})$, $\D(u)=(\nabla u)^s$. For a vector field $u$ and a matrix $E$, we also write $\nabla_Eu=E:\nabla u$. We systematically use {Einstein's summation convention} on repeated indices.
\smallskip\item We denote by $\Md=\R^{d\times d}$ the space of $d\times d$ matrices, by $\Md_0^\Sym$  the subset of symmetric trace-free matrices, and by $\Md^\Skew$  the subset of skew-symmetric matrices.
\smallskip\item We denote by $C\ge1$ any constant that only depends on the dimension $d$,  on the reference domain $U$, and on the hardcore constant $\delta\in (0,1)$. We use the notation~$\lesssim$ (resp.~$\gtrsim$) for $\le C\times$ (resp.\@ $\ge\frac1C\times$) up to such a multiplicative constant $C$. We add subscripts to $C,\lesssim,\gtrsim$ in order to indicate dependence on other parameters.
\smallskip\item The ball centered at $x$ of radius $r$ in $\R^d$ is denoted by $B_r(x)$, and we simply write $B(x)=B_1(x)$, $B_r:=B_r(0)$, and $B=B_1(0)$.
\end{itemize}

\section{Construction of correctors}\label{sec:corr}
This section is devoted to the construction of a suitable solution to the Stokes corrector equation~\eqref{eq:corr-Stokes}.

\begin{prop}\label{prop:corr-Stokes}
Under the assumptions and notation of Theorem~\ref{th:Stokes}, for all $E\in\Md_0^\Sym$, there exist a unique random field $\psi_E\in \Ld^2(\Omega;H^1_\loc(\R^d)^d)$ and a unique pressure field $\Sigma_E\in  \Ld^2(\Omega;\Ld^2_\loc(\R^d\setminus\Ic))$ such that
\begin{enumerate}[(i)]
\item For almost all $\omega$ the realizations $\psi_E^\omega\in H^1_\loc(\R^d)$ and $\Sigma_E^\omega\in\Ld^2_\loc(\R^d\setminus\Ic^\w)$ satisfy
\begin{equation}\label{e.corr-eq}
\quad\left\{\begin{array}{ll}
-\triangle\psi_E^\omega+\nabla \Sigma_E^\omega=0,&\text{in $\R^d\setminus\Ic^\w$},\\
\Div\psi_E^\w=0,&\text{in $\R^d\setminus\Ic^\w$},\\
\D(\psi_E^\omega+Ex)=0,&\text{in $\Ic^\omega$},\\
\fint_{\partial I_n^\omega}\sigma(\psi_E^\omega+Ex,\Sigma_E^\omega)\nu=0,&\forall n,\\
\fint_{\partial I_n^\omega}\Theta(x-\e x_n^\w)\cdot\sigma(\psi_E^\omega+Ex,\Sigma_E^\omega)\nu=0,&\forall n,\,\forall\Theta\in\Md^\Skew.
\end{array}\right.
\end{equation}
\item The corrector gradient $\nabla\psi_E$ and the pressure $\Sigma_E\mathds1_{\R^d\setminus \Ic}$ are stationary\footnote{That is, $\nabla\psi_E^\w(x+y)=\nabla\psi_E^{\tau_{-y}\omega}(x)$ 
and $\Sigma_E^\w(x+y)\mathds1_{\R^d\setminus \Ic^\w}(x+y)=\Sigma_E^{\tau_{-y}\w}(x)\mathds1_{\R^d\setminus \Ic^{\tau_{-y}\w}}(x)$ for all $x,y,\omega$, cf.~Remark~\ref{rem:stat}.},
with
\begin{gather*}
\expecm{\nabla\psi_E}=0,\quad\expecm{\Sigma_E\mathds1_{\R^d \setminus \Ic}}=0,\\
\expecm{|\nabla\psi_E|^2}+\expecm{\Sigma_E^2\mathds{1}_{\R^d \setminus \Ic}}\,\lesssim\,|E|^2,
\end{gather*}
and we choose the anchoring $\int_{B}\psi_E^\omega=0$ for the corrector.
\end{enumerate}
In addition, the following properties hold:
\begin{enumerate}[(i)]\setcounter{enumi}{2}
\item \emph{Ergodic theorem for averages of corrector gradient and pressure:} for almost all $\omega$,
\begin{equation*}\begin{array}{rcll}
(\nabla\psi_E^\omega)(\tfrac\cdot\e)&\rightharpoonup&\expec{\nabla\psi_E}=0 \quad &\text{weakly in } \Ld^2_\loc(\R^d) \text{ as }\e\downarrow0,
\\\vspace{-0.3cm}\\
(\Sigma_E^\omega\mathds1_{\R^d \setminus \Ic^\w})(\tfrac\cdot\e)&\rightharpoonup&\expecm{\Sigma_E\mathds1_{\R^d \setminus \Ic}}=0  \quad&\text{weakly in } \Ld^2_\loc(\R^d) \text{ as }\e\downarrow0.
\end{array}
\end{equation*}
\item \emph{Sublinearity of the corrector:} for almost all $\omega$, for all $q<\frac{2d}{d-2}$,
\[\e\psi_E^\omega(\tfrac\cdot\e)\,\to\,0\quad \text{strongly in }\Ld^q_\loc(\R^d) \text{ as }\e\downarrow0
.\qedhere\]
\end{enumerate}
\end{prop}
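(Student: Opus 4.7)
\medskip
The plan is to follow the classical variational approach to stochastic homogenization in the spirit of Papanicolaou--Varadhan and Kozlov, adapted to the constrained Stokes setting. I would introduce the Hilbert space $\mathcal H$ of centered stationary potential fields---that is, those $G\in\Ld^2(\Omega;\R^{d\times d})$ realized almost surely as $G(\cdot,\omega)=\nabla\phi(\cdot,\omega)$ for some $\phi\in H^1_\loc(\R^d)^d$---together with its closed affine subspace $\mathcal A_E\subset\mathcal H$ of fields satisfying, almost surely, the two compatibility constraints $(\Tr G)\mathds1_{\R^d\setminus\Ic}=0$ (incompressibility in the fluid) and $(G+E)^s\mathds1_{\Ic}=0$ (rigidity inside the inclusions). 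The corrector gradient $\nabla\psi_E$ is then sought as the unique minimizer in $\mathcal A_E$ of the convex functional
\[J(G)\,:=\,\E\bigl[|G^s+E|^2\bigr],\]
whose integrand vanishes on $\Ic$ by the rigidity constraint. Once coercivity is in hand, the direct method yields existence and uniqueness of $\nabla\psi_E$; the potential $\psi_E\in H^1_\loc$ is then reconstructed by integration with the anchor $\int_B\psi_E^\omega=0$, and the moment bound $\E[|\nabla\psi_E|^2]\lesssim|E|^2$ falls directly out of the energy estimate.

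The main analytic ingredient, and the step I expect to be the main obstacle, is a stationary Korn-type inequality on $\mathcal A_0$: $\E[|G|^2]\lesssim\E[|G^s|^2]$ for every $G\in\mathcal A_0$. The rigidity constraint forces the associated potential $\phi$ to be a rigid motion $V_n+\Theta_n(x-x_n)$ inside each inclusion $I_n$, so that the task reduces to controlling $\E\sum_n|\Theta_n|^2\mathds1_{I_n}$ by the symmetric gradient in the fluid. I would achieve this by a local Korn-type estimate on each enlarged ball $(1+\delta/2)I_n$, exploiting divergence-freeness of $\phi$ in the surrounding annulus (whose geometry is uniform in $n$ by the hardcore condition) and the disjointness of these annuli, followed by a partition-of-unity/ergodic averaging argument. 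Coercivity of $J$ on $\mathcal A_E$ then follows immediately.

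The Euler--Lagrange equation $\E[(G^s+E):{G'}^s]=0$ for all $G'\in\mathcal A_0$ translates, on realizations, into the weak form of~\eqref{e.corr-eq}: test fields compactly supported and divergence-free in the fluid yield the Stokes equation in $\R^d\setminus\Ic^\omega$, while test fields equal to rigid motions in one single inclusion (extended to the fluid via a Bogovskii lift preserving incompressibility) yield the force and torque balances on that inclusion. A local pressure $\Sigma_E^\omega\in\Ld^2_\loc(\R^d\setminus\Ic^\omega)$ is then recovered by de Rham's theorem, uniquely up to additive constants per fluid connected component. To make $\Sigma_E\mathds1_{\R^d\setminus\Ic}$ stationary and centered, I would normalize the additive constant so as to impose a vanishing mean over a reference unit region and invoke uniqueness of the minimizer together with translation invariance of the problem. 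The $\Ld^2(\Omega)$-bound $\E[\Sigma_E^2\mathds1_{\R^d\setminus\Ic}]\lesssim|E|^2$ follows by producing, via a uniformly bounded random Bogovskii operator on unit-scale balls (again leveraging the hardcore condition), a test vector field of prescribed divergence equal to $\Sigma_E\mathds1_{\R^d\setminus\Ic}$ with controlled gradient, testing the corrector equation against it, and taking expectations.

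Finally, item~(iii) is a direct application of the multidimensional Birkhoff ergodic theorem to the centered stationary fields $\nabla\psi_E$ and $\Sigma_E\mathds1_{\R^d\setminus\Ic}$, whose $\e$-rescaled versions converge weakly in $\Ld^2_\loc(\R^d)$ to their vanishing expectations. For item~(iv), set $u_\e^\omega(x):=\e\psi_E^\omega(x/\e)$, so that $\nabla u_\e^\omega(x)=(\nabla\psi_E^\omega)(x/\e)$ is bounded in $\Ld^2_\loc$ by the ergodic theorem and weakly convergent to $0$. Rellich--Kondrachov then provides, up to extraction, strong convergence of $u_\e^\omega$ in $\Ld^q_\loc$ for $q<\tfrac{2d}{d-2}$ to a constant, which is forced to vanish by a standard dyadic-scale Poincar\'e/ergodic argument combining the anchor $\int_B\psi_E^\omega=0$ with the ergodic theorem for $|\nabla\psi_E|^2$; uniqueness of the limit removes the subsequence extraction.
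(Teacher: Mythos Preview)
Your variational setup on the probability space is essentially the paper's, and the ``main obstacle'' you anticipate is in fact absent: on the space of divergence-free centered stationary gradient fields one has the identity $\E[|G|^2]=2\E[|G^s|^2]$ (approximate by $\nabla\tilde\phi$ with $\tilde\phi\in H^1(\Omega)^d$ divergence-free and use that $\E[\partial_i\phi_j\,\partial_j\phi_i]=\E[(\Div\phi)^2]=0$), so your minimization of $\E[|G^s+E|^2]$ is equivalent to the paper's orthogonal projection of $0$ onto $\Lc^2_E(\Omega)$ with respect to the full-gradient inner product, and no local Korn estimate near inclusions is needed.

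There are, however, two genuine gaps. First, the Euler--Lagrange equation holds only against \emph{stationary} test fields $G'\in\mathcal A_0$; it does not automatically ``translate, on realizations'' into the weak form for arbitrary compactly supported $\phi\in\Cc^\omega$. Because the admissibility class $\Cc^\omega$ is $\omega$-dependent through the rigidity constraint, one needs a measurable projection $\Mk^\omega:H^1_{c,\Div}(\R^d)^d\to\Cc^\omega$ and the stationarization device $\Phi(x,\omega)=\int\tilde\chi(\tau_y\omega)\,\nabla(\Mk^{\tau_y\omega}\phi)(x+y)\,dy$ to descend from the probabilistic identity to realizations; this is the paper's Step~3 and is where much of the technical work lies. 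Second, and more seriously, your recipe for the pressure does not yield stationarity. Uniqueness and translation invariance only give $\bar\Sigma_E^{\tau_y\omega}(\cdot)=\bar\Sigma_E^\omega(\cdot+y)+c(y,\omega)$ for some constant $c$, and normalizing over a \emph{fixed} reference region $D$ leaves the defect $\fint_D\bar\Sigma_E^\omega-\fint_{y+D}\bar\Sigma_E^\omega$, which has no reason to vanish. The paper instead extends $\bar\Sigma_E$ to $H^1_\loc(\R^d)$ with stationary gradient, establishes via local Stokes regularity the scale-uniform fluctuation bound $\E\big[\fint_{B_R}(\bar\Sigma_E-\fint_{B_R}\bar\Sigma_E)^2\big]\lesssim|E|^2$, and then obtains the stationary pressure as a weak $\Ld^2(\Omega)$ limit of the stationary fields $P_r:=\bar\Sigma_E-\chi_r\ast\bar\Sigma_E$ as $r\uparrow\infty$. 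This mollification step is what your sketch is missing, and the regularity estimate it rests on is not addressed either.
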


\begin{proof}
We start by defining suitable functional subspaces of $\Ld^2(\Omega)^{d\times d}$ that are tailored for the study of the corrector equation~\eqref{e.corr-eq}.
We first consider the subspace of potential fields with vanishing trace, 
\[\Lc^2(\Omega)\,:=\,\big\{\tilde\Psi\in\Ld^2(\Omega)^{d\times d}:\expecm{\tilde\Psi}=0,~\Tr\tilde\Psi=0,~\expecm{\tilde\Psi:(\nabla\times\tilde\chi)}=0~~\forall\tilde\chi\in H^1(\Omega)^d\big\}.\]
Using stationary extensions, cf.~Remark~\ref{rem:stat}, it is well-known (e.g.~\cite[Section~7]{JKO94}) that this space is equivalently given by
\begin{multline*}
\Lc^2(\Omega)\,=\,\big\{\tilde\Psi\in\Ld^2(\Omega)^{d\times d}:\expecm{\tilde\Psi}=0,\,\text{and}\,~\exists\psi\in\Ld^2(\Omega;\Ld^2_\loc(\R^d)^d)\\
\,\text{with}\,\Psi=\nabla\psi~\text{and}\,\Div\psi=0\big\},
\end{multline*}
where the differential constraints are more clearly interpreted.
We further incorporate the specific boundary conditions of the corrector equation~\eqref{e.corr-eq} into the functional space, defining for $E\in\Md_0$ the convex set
\begin{multline*}
\Lc^2_E(\Omega)\,:=\,\big\{\tilde\Psi\in\Lc^2(\Omega)\,:\,\exists\psi\in\Ld^2(\Omega;\Ld^2_\loc(\R^d)^d)\,\text{with}\,\Psi=\nabla\psi,\\
\,\text{and with}\,
\D(\psi^\w+Ex)=0~\text{in $\Ic^\w$}~\forall\w\big\}.
\end{multline*}
As we shall check in Substep~3.1 below, $\Lc^2_E(\Omega)$ is not empty.
Differences of elements of~$\Lc^2_E(\Omega)$ belong to the vector space
\begin{multline*}
\Lc^2_0(\Omega)\,:=\,\big\{\tilde\Psi\in\Lc^2(\Omega)\,:\,\exists\psi\in\Ld^2(\Omega;\Ld^2_\loc(\R^d)^d)\,\text{with}\,\Psi=\nabla\psi,\\
~\text{and with}\,
\D(\psi^\w)=0~\text{in $\Ic^\w$}~\forall\w\big\}.
\end{multline*}
A well-known density result (e.g.~\cite[Section~7]{JKO94}) ensures that
\[\Lc^2(\Omega)\,=\,\adh_{\Ld^2(\Omega)^{d\times d}}\big\{\nabla\tilde\psi\,:\,\tilde\psi\in H^1(\Omega)^{d},\,\Div\tilde\psi=0\big\}.\]
Likewise,
\begin{equation}\label{eq:density-L20}
\Lc^2_0(\Omega)=\adh_{\Ld^2(\Omega)^{d\times d}} \Kc^2_0(\Omega),
\end{equation}
with
\begin{equation*}
\Kc^2_0(\Omega)\,:=\,\big\{\nabla\tilde\psi\,:\,\tilde\psi\in H^1(\Omega)^{d},\,\Div\tilde\psi=0,~\text{and}~
\D(\psi^\w)=0~\text{in $\Ic^\w$}~\forall\w\big\}.
\end{equation*}
Once these spaces are introduced, the structure of the proof is as follows.
We first show that for a solution $(\psi_E,\Sigma_E)$ of~(i)--(ii) the gradient $\nabla\psi_E$ is the unique Lax-Milgram solution in $\Lc^2_E(\Omega)$ of an abstract coercive problem on the probability space.
We then argue that conversely this unique solution indeed provides a solution of~(i)--(ii) in a weak sense in the physical space. Finally, from such a weak formulation, we reconstruct the pressure and establish the desired estimates (iii)--(iv).
The proof is split into five main steps.

\medskip
\step1 From  (i)--(ii) to an abstract problem in $\Lc^2_E(\Omega)$.\\
Let $\psi_E$ be a solution of (i)--(ii).
In particular, $\Psi_E:=\nabla \psi_E$ is stationary and defines an element~$\tilde\Psi_E\in \Lc^2_E(\Omega)$.
We claim that it satisfies
\begin{equation}\label{e.weak-form-corr}
\expecm{\tilde\Phi :\tilde\Psi_E} =0,\qquad\text{for all $\tilde\Phi\in\Lc^2_0(\Omega)$}.
\end{equation}
By density~\eqref{eq:density-L20}, it is enough to prove \eqref{e.weak-form-corr} for all $\tilde\Phi \in \Kc^2_{0}(\Omega)$.
Let $\tilde\Phi \in \Kc^2_{0}(\Omega)$ be given by $\tilde\Phi=\nabla\tilde\phi$ for some $\tilde\phi \in H^1(\Omega)^d$ with $\Div\tilde\phi=0$ and with $\D(\phi^\w)=0$ in $\Ic^\w$ for all $\w$.
In view of the hardcore condition, for all $R>0$, we can construct a cut-off function $\eta_R^\w$ supported in $B_{R+3}$ with $\eta_R^\w=1$ on $B_R$ and with $|\nabla \eta_R^\w|\lesssim_\delta 1$, such that $\eta_R^\w$ is constant in $I_n^\w+\frac\delta4B$ for all $n$.
Since $\psi_E^\w$ is divergence-free, an integration by parts yields
\[\int_{\R^d}\nabla (\eta_R^\w \phi^\w):\nabla \psi_E^\w=
2\int_{\R^d}\nabla(\eta_R^\w \phi^\w):\D(\psi_E^\w),\]
and thus, since $\int_{\R^d}\nabla(\eta_R^\w\phi^\w)=0$ and since $\D(\psi_E^\w)+E=0$ in $\Ic^\w$,
\[\int_{\R^d}\nabla (\eta_R^\w \phi^\w):\nabla \psi_E^\w=
2\int_{\R^d\setminus\Ic^\w}\nabla(\eta_R^\w \phi^\w):(\D(\psi_E^\w)+E).\]
Recalling the definition $\sigma(\psi_E^\w+Ex,\Sigma_E^\w)=2(\D(\psi_E^\w)+E)-\Sigma_E^\w\Id$,
integrating by parts, and using the corrector equation~\eqref{e.corr-eq}, we obtain
\begin{multline}
\int_{\R^d}\nabla (\eta_R^\w \phi^\w):\nabla \psi_E^\w- \int_{\R^d \setminus \Ic^\w} \Div(\eta_R^\w \phi^\w)\,\Sigma_E^\w~=~\int_{\R^d\setminus\Ic^\w}\nabla(\eta_R^\w \phi^\w):\sigma(\psi_E^\w+Ex,\Sigma_E^\w)\\
~=~-\sum_n\int_{\partial I_n^\w}\eta_R^\w \phi^\w\cdot\sigma(\psi_E^\w+Ex,\Sigma_E^\w)\nu.\label{eq:weak-form-eqn00+}
\end{multline}
For all $n$, since $\eta_R^\w$ is constant in $I_n^\w$ and since $\phi^\w$ takes the special form $\kappa^\w_n+\Theta^\w_n(x-x_n^\w)$ in $I_n^\w$ for some $\kappa_n^\w\in\R^d$ and $\Theta_n^\w\in\Md^\Skew$, the boundary condition in~\eqref{e.corr-eq} for $\psi_E^\w$ on $\partial I_n^\w$ precisely yields
\begin{equation*}
\int_{\partial I_n^\w} \eta_R^\w\,\phi^\w \cdot\sigma(\psi_E^\w+Ex,\Sigma_E^\w)\nu
\,=\,0.
\end{equation*}
The weak form~\eqref{eq:weak-form-eqn00+} of the equation thus becomes, after expanding the gradients and recalling that $\Div\phi^\w=0$,
\begin{equation*}
\int_{\R^d} \eta_R^\w \nabla \phi^\w:\nabla \psi_E^\w
\,=\,- \int_{\R^d} \phi^\w\otimes\nabla \eta_R^\w : \nabla \psi_E^\w+ \int_{\R^d \setminus \Ic^\w}\Sigma_E^\w\nabla \eta_R^\w\cdot \phi^\w,
\end{equation*}
which by the properties of $\eta_R^\w$ we rewrite as
\begin{multline*}
\int_{B_R}   \nabla \phi^\w: \nabla \psi_E^\w\,=\,-\int_{B_{R+3}\setminus B_R} \eta_R^\w \nabla \phi^\w: \nabla \psi_E^\w
\\
 -\int_{B_{R+3}\setminus B_R} \phi^\w\otimes\nabla \eta_R^\w : \nabla \psi_E^\w
 +\int_{B_{R+3}\setminus B_R }\Sigma_E^\w\mathds{1}_{\R^d \setminus \Ic^\w}\nabla \eta_R^\w\cdot  \phi^\w.
\end{multline*}
Taking the expectation, using the stationarity of $\phi$, $\nabla \psi_E$, and $\Sigma_E$, as well as the a priori bounds~(ii) and the boundedness of $\eta_R$, we obtain from Cauchy-Schwarz' inequality for all~$R\ge 1$,
$$
\big|\expecm{\nabla\tilde\phi : \tilde\Psi_E }\big|\,\lesssim\, \frac1R |E| \, \expec{|\phi|^2+|\nabla \phi|^2}^\frac12,
$$
and the claim follows from the arbitrariness of $R$.

\medskip
\step2 Well-posedness of the abstract problem~\eqref{e.weak-form-corr}.\\
In this step, we argue that there exists a unique solution $\tilde\Psi_E \in \Lc^2_E(\Omega)$ to the problem~\eqref{e.weak-form-corr}.
As we shall check in Substep~3.1 below, the convex set $\Lc^2_E(\Omega)$ is not empty, so that we may choose a reference field $\tilde\Psi^0_E\in \Lc^2_E(\Omega)$. Writing $\tilde\Psi_E=\tilde\Psi^0_E+\tilde\Psi^1_E$
for some $\tilde\Psi^1_E\in \Lc^2_0(\Omega)$, the equation~\eqref{e.weak-form-corr} for $\tilde\Psi_E$ is equivalent to the following equation for $\tilde\Psi^1_E$,
\begin{equation}\label{e.weak-form-probab}
\expecm{\tilde\Phi:\tilde\Psi^1_E}=-\expecm{\tilde\Phi:\tilde\Psi^0_E}\qquad\text{for all $\tilde\Phi\in\Lc^2_0(\Omega)$}.
\end{equation}
The existence and uniqueness of the solution $\tilde \Psi^1_E$ to this equation then follow from the Lax-Milgram theorem in the Hilbert space $\Lc^2_0(\Omega)$.

\medskip

\step3 From the abstract problem~\eqref{e.weak-form-corr} to a weak formulation of~(i).\\
Let $\tilde\Psi_E \in \Lc^2_E$ denote the unique solution of~\eqref{e.weak-form-corr} as constructed in Step~2, 
which can be written as $\Psi_E=\nabla \psi_E$ in terms of the almost surely unique random field $\psi_E\in\Ld^2(\Omega;H^1_\loc(\R^d)^d)$ that satisfies
the anchoring condition $\int_B \psi_E=0$ at the origin.
By construction, $\Div\psi_E^\w=0$, and $\D(\psi_E^\w+Ex)=0$ in $\Ic^\w$ for all $\w$.
Next, we prove that 
$\psi_E$ satisfies the following weak formulation of~\eqref{e.corr-eq}: for almost all $\w$,  
\begin{equation}\label{e:back-phys}
\int_{\R^d} \nabla \phi : \nabla \psi_E^\w\,=\,0,
\end{equation}
for all test functions $\phi$ in the class
\begin{equation*}
\mathcal C^\w:=\big\{\phi\in H^1(\R^d)^d\,:\,\text{$\phi$ has compact support, }\Div\phi=0,~
\text{and }\D(\phi)=0\text{ in $\Ic^\w$}\big\}.
\end{equation*}
We split the proof of~\eqref{e:back-phys} into two further substeps.

\medskip
\substep{3.1} Definition of a suitable map $\Mk^\w:H^1_{c,\Div}(\R^d)^d\to\Cc^\w$, where $H^1_{c,\Div}(\R^d)^d$ stands for the subspace $\{\zeta\in H^1(\R^d)^d:\zeta\text{ has compact support and }\Div\zeta=0\}$
of $H^1(\R^d)^d$.

\medskip\noindent
Choose a map $\Mk_\circ:H^1_{\Div}(B_{1+\delta/2})^d\to H^1_{\Div}(B_{1+\delta/2})^d$ that satisfies for all $\zeta\in H^1_{\Div}(B_{1+\delta/2})^d$:
\begin{enumerate}[(1)]
\item $\Mk_\circ \zeta-\zeta\in H^1_0(B_{1+\delta/2})$;
\item $\D(\Mk_\circ \zeta)=0$ in $B$;
\item if $\D(\zeta)=0$ in $B$, then $\Mk_\circ \zeta=\zeta$;
\item $\|\nabla \Mk_\circ \zeta\|_{\Ld^2(B_{1+\delta/2})} \,\lesssim\, \|\nabla\zeta\|_{\Ld^2(B_{1+\delta/2})}$.
\end{enumerate}
Such a map $\Mk_\circ$ can for instance be constructed as follows,
\begin{multline}\label{e.defM}
\Mk_\circ \zeta  \,:=\, \mathrm{arginf}\,\big\{\|\nabla \xi-\nabla \zeta\|_{\Ld^2(B_{1+\delta/2})}^2\,:\,
\xi \in \zeta+ H^1_0(B_{1+\delta/2}),\,\Div\xi=0,\\
\text{and }\D(\xi)=0\text{ in $B$}\big\}.
\end{multline}
Since this is the minimization of a strictly convex lower-semicontinuous functional on a convex set, the infimum is attained and unique provided the convex set is nonempty.
Choosing $\kappa=\fint_{B_{1+\delta/2}\setminus B}\zeta$ and $\Theta=0$, it suffices to check that there exists $\xi\in\zeta+ H^1_0(B_{1+\delta/2})$ with
\begin{equation}\label{eq:real-pr}
\Div\xi=0\qquad\text{and}\qquad\xi|_B=\kappa.
\end{equation}
For that purpose, choose $u_\zeta \in H^1_0(B_{1+\delta/2})$ that coincides with $-\zeta+\kappa$ on~$B$.
In view of the compatibility condition
$$
\int_{B_{1+\delta /2}\setminus B} \Div u_\zeta=\int_{\partial B_{1+\delta /2}} u_\zeta \cdot \nu-\int_{\partial B} u_\zeta \cdot \nu = \int_{\partial B} \zeta \cdot \nu = \int_B \Div \zeta=0, 
$$
a standard use of the Bogovskii operator in form of~\cite[Theorem~III.3.1]{Galdi} ensures that this $u_\zeta$ can be modified in $B_{1+\delta/2}\setminus B$ (without changing its boundary values) to be divergence-free in $B_{1+\delta/2}\setminus B$ (hence in the whole of $B_{1+\delta/2}$), with the estimate 
\[\|\nabla u_\zeta\|_{\Ld^2(B_{1+\delta / 2}\setminus B)} \,\lesssim\, \|\zeta-\kappa \|_{H^\frac12(\partial B)}.\]
In particular, by a trace estimate and Poincar\'e's inequality, this yields
\begin{equation*}
\|\nabla u_\zeta\|_{\Ld^2(B_{1+\delta / 2}\setminus B)}\,\lesssim \,  \|\zeta-\kappa \|_{H^1(B_{1+\delta/2}\setminus B)}\,\lesssim_\delta\,   \|\nabla \zeta\|_{\Ld^2(B_{1+\delta/2}\setminus B)}.
\end{equation*}
The function $\xi_\zeta:=\zeta+ u_\zeta\in\zeta+ H^1_0(B_{1+\delta/2})$ then satisfies~\eqref{eq:real-pr} and
\[\|\nabla\xi_\zeta\|_{\Ld^2(B_{1+\delta/2})}\lesssim_\delta\|\nabla\zeta\|_{\Ld^2(B_{1+\delta/2})}.\]
This implies that $\Mk_\circ$ in~\eqref{e.defM} is well-defined and indeed satisfies the properties~(1)--(4). 

\medskip\noindent
Next, for $\zeta\in H^1_{c,\Div}(\R^d)^d$, we extend $\Mk_\circ\zeta$ by $\zeta$ outside $B_{1+\delta/2}$, and for all $x\in \R^d$ we denote by $\Mk_x$ the corresponding operator when the origin $0$ is replaced by $x$.
For all $\w$, we then define the operator $\Mk^\w:=\prod_n\Mk_{x_n^\w}$, which indeed maps $H^1_{c,\Div}(\R^d)^d$ to $\calC^\w$ as desired.

\medskip\noindent
We conclude this construction of $\Mk^\w$ with a weak continuity result: for all bounded domains~$D$ and all sequences $(\zeta_n)_n$ of divergence-free functions compactly supported in $D$, if  $\zeta_n \cvf \zeta$ weakly in $H^1(D)$, then for all $\w$ we have $\Mk^\w \zeta_n \rightharpoonup \Mk^\w \zeta$ in $H^1(D)$.
In view of the above construction of $\Mk^\w$, it is enough to prove this continuity result at the level of the elementary map~$\Mk_\circ$.
Since the sequence $(\Mk_\circ \zeta_n)_n$ is bounded in $H^1(B_{1+\delta/2})$, it converges to some $\xi$ along 
a subsequence (not relabelled), which is necessarily an admissible test function for the minimization problem \eqref{e.defM} for $\Mk_\circ\zeta$.
It remains to argue that it coincides with the desired minimizer $\Mk_\circ\zeta$.
To this aim, we use that the unique minimizers $\Mk_\circ \zeta_n$ of \eqref{e.defM} are characterized by the following
Euler-Lagrange equations: for all  $\xi' \in H^1_0(B_{1+\delta/2})$ with $\Div \xi'=0$ and with
$\D(\xi')=0$ in $B$,
$$
\int_{B_{1+\delta /2}} (\nabla \Mk_\circ \zeta_n-\nabla \zeta_n) : \nabla \xi'=0,
$$
in which we may pass to the limit in $n$ in form of 
$$
\int_{B_{1+\delta/2}} (\nabla \xi-\nabla \zeta) : \nabla \xi'=0,
$$
thus recovering the Euler-Lagrange equation for  $\Mk_\circ\zeta$. This entails $\xi=\Mk_\circ \zeta$ and ensures the convergence of the whole sequence.

\medskip\noindent
We now quickly argue that a similar argument ensures that the convex set $\Lc^2_E(\Omega)$ is not empty.
Choose $u_{E} \in H^1_0(B_{1+\delta/2})$ that coincides with $x\mapsto-Ex$ in $B$.
In view of the compatibility condition
$$
\int_{B_{1+\delta/2}\setminus B} \Div u_{E}=\int_{\partial B_{1+\delta/2}} u_{E} \cdot \nu-\int_{\partial B} u_{E} \cdot \nu = \int_{\partial B} Ex \cdot \nu = |B|\Tr E=0, 
$$
a standard use of the Bogovskii operator in form of~\cite[Theorem~III.3.1]{Galdi} ensures that $u_{E}$ can be chosen divergence-free in $B_{1+\delta/2}\setminus B$ (hence in the whole of $B_{1+\delta/2}$), with the estimate 
$$
\|\nabla u_{E}\|_{\Ld^2(B_{1+\delta/2}\setminus B)}  \,\lesssim\,|E|.
$$
We may then define the stationary function $\phi=\sum_n u_{E}(\cdot-x_n)$, which 
is such that $\tilde \Phi=\nabla \tilde \phi$ belongs to $\Lc^2_E(\Omega)$ by construction.

\medskip

\substep{3.2} Proof of \eqref{e:back-phys}.\\
Given a vector field $\phi \in H^1_{c,\Div}(\R^d)^d$ and given a random variable $\tilde\chi \in \Ld^2(\Omega)$, we define $\Phi$ as the stationarization of the product $\tilde\chi\nabla \Mk\phi$, that is,
$$
\Phi(x, \omega)\,:=\, \int_{\R^d}\tilde\chi(\tau_y\omega)\,\nabla(\Mk^{\tau_y \omega} \phi)(x+y)\,dy,
$$
which is well-defined in $\Ld^2(\Omega,\Ld^2_\loc(\R^d)^{d\times d})$ since $\phi$ (hence $\sup_\w|\Mk^\w\phi|$) is compactly supported.
On the one hand, $\Phi$ is obviously a stationary random field: for all $x,z,\w$,
\begin{eqnarray*}
\Phi(x+z, \omega)&=& \int_{\R^d}\tilde\chi(\tau_{y} \omega) \nabla \Mk^{\tau_y \omega} \phi(x+z+y)\,dy
\\
&=&\int_{\R^d} \tilde\chi(\tau_{y-z} \omega) \nabla \Mk^{\tau_{y-z} \omega} \phi(x+y)dy
\\
&=&\Phi(x, \tau_{-z}\omega).
\end{eqnarray*}
On the other hand, the definition of $\Mk$ ensures that $\tilde\Phi$ belongs to $\Lc_0^2$, which makes it an admissible test function for~\eqref{e.weak-form-corr}. By stationarity of $\Psi_E=\nabla\psi_E$ and of $\Ic$
in the form $(\Psi_E^\omega{\mathds 1}_{\R^d \setminus \Ic^\w})(0)=(\Psi_E^{\tau_y \omega}{\mathds 1}_{\R^d \setminus \Ic^{\tau_y \omega}})(y)$, and since the group action preserves the probability measure, we find
\begin{eqnarray*}
0&=&\expecm{\tilde\Phi:\tilde\Psi_E}
\\
&=&\int_\Omega\bigg(\int_{\R^d}\tilde\chi(\tau_{y} \omega) \nabla(\Mk^{\tau_y \omega}\phi)(y)\,dy :\tilde\Psi_E^\omega\bigg)d\Pm(\omega)
\\
&=&\int_\Omega\bigg({\int_{\R^d}\tilde\chi(\tau_{y} \omega) \nabla(\Mk^{\tau_y \omega}\phi)(y) :\nabla\psi_E^{\tau_y \omega}(y)\,dy}\bigg)d\Pm(\omega)
\\
&=&\expec{\tilde\chi \int_{\R^d}\nabla(\Mk\phi):\nabla \psi_E}.
\end{eqnarray*}
By the arbitrariness of $\tilde\chi$, this implies that for any compactly supported vector field $\phi \in H^1_{\Div}(\R^d)^d$ there holds for almost all $\omega$,
$$
\int_{\R^d}\nabla(\Mk^{\omega} \phi):\nabla \psi_E^{\omega}\,=\,0.
$$
By a density argument together with the weak continuity of $\Mk^\w$ as established in Substep~3.1, we deduce that for almost all $\w$ this actually holds for all compactly supported vector fields $\phi \in H^1_{\Div}(\R^d)^d$.
Given $\w$, for $\phi^\w$ in the (realization-dependent) class $\Cc^\w$, there holds $\Mk^\omega \phi^\w =\phi^\w$ and the conclusion~\eqref{e:back-phys} follows.

\medskip
\step4 Reconstruction of the pressure.\\
In Step~3, we proved that the unique solution $\Psi_E=\nabla \psi_E$ of the abstract problem~\eqref{e.weak-form-corr}
also satisfies the weak formulation~\eqref{e:back-phys} of the corrector equation~\eqref{e.corr-eq}. In addition, note that the construction of Step~3 yields the bound $\expec{|\nabla\psi_E|^2}\lesssim|E|^2$.
In the present step, we show that one can construct a stationary pressure field $\Sigma_E$ such that for almost all $\w$ the vector field $\psi_E^\w$ is a classical solution of the corrector equation~\eqref{e.corr-eq},
and that $\Sigma_E$ and $\nabla \psi_E$ satisfy (ii).
We split the proof into five further substeps.

\medskip 
\substep{4.1} Reconstruction of a pressure field $\bar \Sigma_E$.\\
For $R\ge2$, consider the bounded Lipschitz domain
\[D_R^\w:=B_R\cup\bigcup_{n:I_n^\w\cap B_R\ne\varnothing}(I_n^\w+\tfrac\delta 2B).\]
In view of~\eqref{e:back-phys}, for almost all $\w$, $\psi_E^\w$ satisfies for all vector fields $\phi\in H^1_{c,\Div}(\R^d)^d$ that vanish on $\Ic^\w$ and outside $D_R^\w$,
$$
\int_{D_R^\w} \nabla \phi:\nabla \psi_E^\w=0.
$$
We deduce that $\psi_E^\w$ is a weak solution of 
\begin{equation}\label{e.corr-mod}
\left\{\begin{array}{ll}
-\triangle\psi_E^\w+\nabla \bar \Sigma_E^\w=0,&\text{in $D_R^\w\setminus\Ic^\w$},\\
\Div\psi_E^\w=0,&\text{in $D_R^\w$},\\
\D(\psi_E^\w+Ex)=0,&\text{in $\Ic^\w\cap D_R^\w$},
\end{array}\right.
\end{equation}
in the sense of \cite[Definition~IV.1.1]{Galdi}.
Hence, by \cite[Lemma~IV.1.1]{Galdi},  there exists a unique pressure field $\bar \Sigma_E^\w \in \Ld^2(D_R^\w\setminus\Ic^\w)$
with the anchoring condition $\int_{2B} \bar \Sigma_E^\w\mathds1_{\R^d\setminus\Ic^\w}=0$,
such that~\eqref{e.corr-mod}
holds in the usual weak sense (that is, for all test functions $\phi \in H^1_0(D_R^\w \setminus \Ic^\w)^d$ without divergence-free constraint).
In addition, by~\cite[Theorems~IV.4.3 and~IV.5.2]{Galdi},  both $\psi_E^\w$ and $\bar \Sigma_E^\w$ are smooth in $B_{R/2} \setminus \Ic^\w$.
By the arbitrariness of $R$, this implies that the pressure field $\bar \Sigma_E^\w$ is well-defined in $\Ld^2_\loc(\R^d\setminus\Ic^\w)$ and that $\psi_E^\w$ and $\bar \Sigma_E^\w$ are smooth
on $\R^d \setminus \Ic^\w$. In particular, the solutions are classical and the boundary conditions in~\eqref{e.corr-eq}
are satisfied in a pointwise sense.
Note that the joint measurability of $\bar \Sigma_E$ on $\R^d\times\Omega$ easily follows from the reconstruction procedure for the pressure in~\cite{Galdi}; details are omitted.

\medskip
\substep{4.2} Proof that for all $R \ge 5$,
\begin{equation}\label{e.L2-pressure}
\fint_{B_R\setminus \Ic^\w} \Big(\bar \Sigma_E^\w-\fint_{B_R\setminus \Ic^\w}\bar \Sigma_E^\w\Big)^2 \,\lesssim\, \fint_{B_R\setminus \Ic^\w}|\nabla \psi_E^\w|^2.
\end{equation}
As usual for pressure estimates for the Stokes equation, we first need to construct a map $\zeta_R^\w \in H^1_0(B_R)$
such that
\begin{gather}
\Div \zeta_R^\w=\Big(\bar \Sigma_E^\w-\fint_{B_R\setminus \Ic^\w} \bar \Sigma_E^\w\Big)\mathds 1_{\R^d\setminus \Ic^\w},\label{eq:divzeta-ch}\\
\|\nabla \zeta_R^\w \|_{\Ld^2(B_R)} \,\lesssim\, \Big\|\bar \Sigma_E-\fint_{B_R\setminus \Ic^\w}\bar \Sigma_E^\w\Big\|_{\Ld^2(B_R\setminus \Ic^\w)},\label{eq:gradzeta-ch}
\end{gather}
with the slight twist that $\zeta_R^\w|_{I_n^\w}$ further needs to be constant for all $n$.
Testing~\eqref{e.corr-eq} with such a $\zeta_R^\w$ then yields
$$
\int_{\R^d\setminus \Ic^\w}  \nabla \zeta_R^\w : \nabla \psi_E^\w-\int_{\R^d\setminus \Ic^\w}\bar \Sigma_E^\w \Div \zeta_R^\w
\,=\,0,
$$
which entails in view of the choice~\eqref{eq:divzeta-ch} of $\zeta_R^\w$,
$$
\int_{B_R \setminus \Ic^\w}\Big|\bar \Sigma_E^\w-\fint_{B_R\setminus \Ic^\w}\bar \Sigma_E^\w\Big|^2 \,\le\, \int_{B_R \setminus\Ic^\w}  |\nabla \zeta_R^\w| |\nabla \psi_E^\w|,
$$
and~\eqref{e.L2-pressure} follows from~\eqref{eq:gradzeta-ch}.

\medskip\noindent
It remains to construct such a map $\zeta_R^\w$.
First define $\xi_R^\w\in H^1_0(B_R)^d$ (extended to zero outside $B_R$) as a solution of the divergence problem
\begin{gather*}
\Div \xi_{R}^\w\,=\, \Big(\bar \Sigma_E^\w-\fint_{B_R\setminus \Ic^\w} \bar \Sigma_E^\w\Big)\mathds 1_{\R^d \setminus \Ic^\w},\\
\|\nabla \xi_R^\w\|_{\Ld^2(B_R)}\,\lesssim\, \Big\|\bar \Sigma_E^\w-\fint_{B_R\setminus \Ic^\w}\bar \Sigma_E^\w\Big\|_{\Ld^2(B_R\setminus \Ic^\w)},
\end{gather*}
as provided by~\cite[Theorem~III.3.1]{Galdi}, where we emphasize that the multiplicative constant in the estimate is uniformly bounded in $R$.
Next, we need to modify $\xi_R^\w$ in the inclusions $I_n^\w$'s that intersect $B_R$ without changing $\Div \xi_R^\w$
and without increasing the norm of $\nabla \xi_R^\w$ too much. This is performed by constructing suitable compactly supported corrections around the inclusions.
For inclusions $I_n^\w$'s contained in $B_R$ with $\mathrm{dist}(I_n^\w, \partial B_R) \ge \delta$,
arguing as in Substep~3.1, we can construct a divergence-free vector field $\xi_{R,n}^\w\in H^1_0(I_n^\w+\frac\delta2B)^d$ that coincides with $-\xi_R^\w+\fint_{(I_n^\w+\frac\delta2B)\setminus I_n^\w}\xi_R^\w$ on $I_n^\w$ such that
\[\|\nabla\xi_{R,n}^\w\|_{\Ld^2((I_n^\w+\frac\delta2B)\setminus I_n^\w)}\lesssim\|\nabla\xi_R^\w\|_{\Ld^2((I_n^\w+\frac\delta2B)\setminus I_n^\w)}.\]
We turn to inclusions $I_n^\w$'s that intersect $B_R$ such that $\mathrm{dist}(I_n^\w,\partial B_R) < \delta$, for which we construct a divergence-free vector field $\xi_{R,n}^\w \in H^1_0(B_R\cap(I_n^\w+\frac\delta2B))^d$ that coincides with $-\xi_R^\w$ on $B_R\cap I_n^\w$ (that is indeed divergence-free there).
Such a vector field can be constructed as an application of the Bogovskii operator on $B_R\cap(I_n^\w+\frac\delta2B)\setminus I_n^\w$, in view of the compatibility condition
\begin{eqnarray}
\int_{B_R\cap(I_n^\w+\frac\delta2B)\setminus I_n^\w} \Div \xi_{R,n}^\w &=& \int_{\partial(B_R\cap(I_n^\w+\frac\delta2B))} \xi_{R,n}^\w \cdot \nu - \int_{\partial(B_R\cap I_n^\w)} \xi_{R,n}^\w \cdot \nu\nonumber
\\
&=&\int_{\partial (B_R\cap I_n^\w)} \xi_R^\w \cdot \nu
\,=\,\int_{B_R\cap I_n^\w} \Div \xi_R^\w \,=\,0,\label{e.compatibility}
\end{eqnarray}
and it satisfies
\[\|\nabla\xi_{R,n}^\w\|_{\Ld^2(B_R\cap(I_n^\w+\frac\delta2B)\setminus I_n^\w)}\lesssim\|\xi_R^\w\|_{H^\frac12(\partial(B_R\cap I_n^\w))}.\]
Hence, by a trace estimate (with $\partial I_n^\w$ at distance at most $\delta$ from $\partial B_R$, on which $\xi_R^\w$ vanishes) and Poincar\'e's inequality,
\begin{eqnarray*}
\|\nabla\xi_{R,n}^\w\|_{\Ld^2(B_R\cap(I_n^\w+\frac\delta2B)\setminus I_n^\w)}
&\lesssim& \|\xi_R^\w\|_{H^1( B_R\cap (I_n^\w+2\delta B))}
\\
&\lesssim&\|\nabla\xi_R^\w\|_{\Ld^2(B_R\cap (I_n^\w+2\delta B))}.
\end{eqnarray*}
We finally define
$$
\zeta_R^\w\,:=\, \xi_R^\w +\sum_{n:I_n^\w\cap B_R\ne\varnothing}\xi_{R,n}^\w,
$$
which by construction is constant in each of the inclusions $I_n^\w$'s and satisfies the required properties~\eqref{eq:divzeta-ch} and~\eqref{eq:gradzeta-ch}.

\medskip

\substep{4.3} Extension of $\bar \Sigma_E$ to $\R^d$ and estimate of $\nabla \bar \Sigma_E$.\\
In this substep, we extend $\bar \Sigma_E$ to $\R^d$ in such a way that  $\bar \Sigma_E \in \Ld^2(\Omega;H^1_\loc(\R^d))$, that $\nabla \bar \Sigma_E$ is stationary, and
that we have for all $R\ge 5$,
\begin{equation}\label{e.bd-grad-p}
\expec{\fint_{B_R} \Big(\bar \Sigma_E - \fint_{B_R} \bar \Sigma_E\Big)^2}+\expec{|\nabla \bar \Sigma_E|^2} \,\lesssim\, |E|^2.
\end{equation}
We start by proving that $(\nabla \bar \Sigma_E^\w) \mathds 1_{\R^d \setminus \Ic}$ is a stationary field and satisfies
\begin{equation}\label{eq:bound-pE1}
\expecm{|(\nabla \bar \Sigma_E) \mathds 1_{\R^d \setminus \Ic}|^2} \,\lesssim\, |E|^2.
\end{equation}
By the Stokes equation in form of $(\triangle \psi_E^\w) \mathds 1_{\R^d \setminus \Ic}=(\nabla \bar \Sigma_E^\w) \mathds 1_{\R^d \setminus \Ic^\w}$, it suffices to prove that $(\triangle \psi_E)\mathds 1_{\R^d\setminus \Ic} \in \Ld^2(\Omega)$ satisfies $\expecm{|(\triangle \psi_E)\mathds 1_{\R^d\setminus \Ic}|^2}\,\lesssim\,|E|^2$.
Since $(\nabla \psi_E)\mathds 1_{\R^d\setminus \Ic}$ is stationary and since $\psi_E$ is of class $C^2$ up to the boundaries $\partial I_n$, it is enough to prove that for almost all $\w$,
\begin{equation}\label{eq:+refrev}
\limsup_{R\uparrow \infty} \fint_{B_R} |(\nabla^2 \psi_E^\w)\mathds 1_{\R^d\setminus \Ic}|^2 \,\lesssim\, |E|^2.
\end{equation}
To this aim, it suffices to show that for all $x \in \R^d$,
\begin{equation}\label{e.reg-th-corr-galdi}
\int_{B_{\delta/8}(x)}(|\nabla^2 \psi_E^\w|^2+|\nabla \Sigma_E^\w|^2)\mathds1_{\R^d\setminus \Ic}
\,\lesssim_\delta\, \int_{B_{5}(x)}|\nabla \psi_E^\w|^2,
\end{equation}
since the desired estimate~\eqref{eq:+refrev} then follows in combination with the ergodic theorem and the bound $\expec{|\nabla \psi_E|^2}\lesssim |E|^2$.
First consider the case when $x\in \R^d$ satisfies $\mathrm{dist}\,(x,\Ic^\w)>\delta/4$, for which $B_{\delta/4}(x) \subset \R^d \setminus \Ic^\w$.
By interior regularity for the Stokes equation in form of~\cite[Theorems~IV.4.1]{Galdi}, by~\eqref{e.L2-pressure}, and by Poincar\'e's inequality, we then have
with the choice $c_1^\w=\fint_{B_{\delta/2}(x)} \psi_E^\w$ and $c_2^\w= \fint_{B_5(x)\setminus \Ic^\w}\bar \Sigma_E^\w$,
\begin{eqnarray*}
\int_{B_{\delta/8}(x)}(|\nabla^2 \psi_E^\w|^2+|\nabla\bar \Sigma_E^\w|^2)\mathds1_{\R^d\setminus \Ic^\w}
&\lesssim_\delta &\int_{B_{\delta/4}(x)}(|\nabla \psi_E^\w|^2+|\psi_E^\w-c_1^\w|^2+|\bar \Sigma_E^\w-c_2^\w|^2)
\\
&\lesssim& \int_{B_{5}(x)}|\nabla \psi_E^\w|^2 ,
\end{eqnarray*}
that is,~\eqref{e.reg-th-corr-galdi}.
Next consider the case when $x \in \R^d$ satisfies $\mathrm{dist}\,(x,\Ic^\w)\le\delta/4$, and let~$I_n^\w$ be the unique ball such that  $\mathrm{dist}\,(x,I_n^\w)<\delta/4$. 
By the boundary regularity theory for the Stokes equation in form of~\cite[Theorems~IV.5.1--5.3]{Galdi}, we then have 
with the choice $c_1^\w=\fint_{I_n^\w+\frac \delta 2 B} \psi_E^\w$ and $c_2^\w=\fint_{B_5(x)\setminus \Ic^\w}\bar \Sigma_E^\w$,
\begin{multline*}
\int_{B_{\delta/8}(x)}(|\nabla^2 \psi_E^\w|^2+|\nabla \bar \Sigma_E^\w|^2)\mathds1_{\R^d\setminus \Ic^\w}
\,\lesssim_\delta\,\|\psi_E^\w|_{I_n^\w}-c_1^\w\|_{H^\frac32(\partial I_n^\w)}^2
\\+\|\bar \Sigma_E^\w-c_2^\w\|_{\Ld^2((I_n^\w+\frac\delta2B)\setminus I_n^\w)}^2+\|\psi_E^\w-c_{1}^\w\|_{H^1((I_n^\w+\frac\delta2B)\setminus I_n^\w)}.
\end{multline*}
Since $\psi_E^\w$ is affine on $I_n^\w$, we have
\begin{eqnarray*}
\|\psi_E^\w|_{I_n^\w}-c_{1}^\w\|_{H^\frac32(\partial I_n^\w)}
&\lesssim& \|\psi_E^\w|_{I_n^\w}-c_{1}^\w\|_{H^2( I_n^\w)}=\|\psi_E^\w -c_{1}^\w\|_{H^1( I_n^\w)}
\\
&\le&\|\psi_E^\w -c_{1}^\w\|_{H^1( I_n^\w+\frac\delta 2 B)},
\end{eqnarray*}
while Poincaré's inequality with mean-value zero yields
\[\|\psi_E^\w-c_1^\w\|_{H^1(I_n^\w+\frac\delta2B)}\,\lesssim_\delta\, 
\|\nabla \psi_E^\w\|_{\Ld^2(I_n^\w+\frac\delta2B)},\]
so that in combination with  \eqref{e.L2-pressure} the above turns into \eqref{e.reg-th-corr-galdi}.

\medskip\noindent
It remains to extend $\bar \Sigma_E$ on the inclusions.
We simply choose $\bar \Sigma_E^\w |_{B_{1/2}(x_n^\w)}=\fint_{(I_n^\w+\frac\delta2B)\setminus I_n^\w} \bar \Sigma_E^\w$, 
and we extend $\bar \Sigma_E^\w$ radially linearly between $\partial I_n^\w$ and $\partial B_{1/2}(x_n^\w)$ (recall that $\bar \Sigma_E^\w \mathds 1_{\R^d\setminus \Ic^\w}$ is continuous up to the boundary).
So defined, $\bar \Sigma_E^\w$ belongs to $H^1_\loc(\R^d)$ and $\nabla \bar \Sigma_E$ is stationary on~$\R^d$.
We conclude by establishing~\eqref{e.bd-grad-p}.
Noting that the choice of the extension ensures
\[\int_{I_n^\w} |\nabla \Sigma_E^\w|^2 \,\lesssim_\delta\, \int_{(I_n^\w+\frac\delta4B)\setminus I_n^\w} |\nabla \Sigma_E^\w|^2,\]
the gradient estimate in~\eqref{e.bd-grad-p} simply follows from~\eqref{eq:bound-pE1},
and it remains to check the other part.
By the definition of the extension, with $c^\w=\fint_{B_{R+2}\setminus \Ic} \bar \Sigma_E^\w$, we find
using~\eqref{e.L2-pressure} and~\eqref{eq:bound-pE1},
\begin{eqnarray*}
\lefteqn{\fint_{B_R} \Big(\bar \Sigma_E^\w - \fint_{B_R} \bar \Sigma_E^\w\Big)^2\,\lesssim\, \fint_{B_R} (\bar \Sigma_E^\w - c^\w)^2}\\
 &\lesssim& 
\fint_{B_R} (\bar \Sigma_E^\w - c^\w)^2\mathds1_{\R^d\setminus\Ic^\w}+R^{-d} \int_{B_R\cap \Ic} (\bar \Sigma_E^\w-c^\w)^2
\\
&\lesssim_\delta&  \fint_{B_R} (\bar \Sigma_E^\w - c^\w)^2\mathds1_{\R^d\setminus\Ic^\w}+R^{-d} \sum_{n:I_n^\w \cap B_R\ne\varnothing}  \int_{(I_n^\w+\frac\delta2B)\setminus I_n^\w}( |\bar \Sigma_E^\w-c^\w|^2+|\nabla\bar \Sigma_E^\w|^2)
\\
& \lesssim &
  \fint_{B_{R+2}} (\bar \Sigma_E^\w - c^\w)^2\mathds1_{\R^d\setminus\Ic^\w}+\fint_{B_{R+2}} |\nabla\bar \Sigma_E^\w|^2\mathds1_{\R^d\setminus\Ic^\w}
\\
&\lesssim & \fint_{B_{R+2}} |\nabla \psi_E^\w|^2,
\end{eqnarray*}
and the estimate~\eqref{e.bd-grad-p} follows.

\medskip

\substep{4.4} Construction of a stationary pressure field $\Sigma_E$.\\
Let $\chi\in C^\infty_c(B)$ satisfy $\int_B\chi=1$, consider the rescaled kernel $\chi_r=\frac1{r^d} \chi(\frac \cdot r)$ for $r\ge1$, and define $P_r:=\bar \Sigma_E - \chi_r \ast\bar \Sigma_E$.
By construction, $P_r$ is stationary, and we claim that 
\begin{eqnarray}\label{e.bd-const-press1}
\expec{P_r^2+|\nabla P_r|^2} &\lesssim& |E|^2,
\\
\lim_{r \uparrow \infty} \expecm{|\nabla P_r-\nabla \bar \Sigma_E|^2\mathds1_{\R^d \setminus \Ic}} &=&0.\label{e.bd-const-press2}
\end{eqnarray}
From \eqref{e.bd-const-press1}, we deduce by weak compactness that there exists some $\tilde P \in H^1(\Omega)$ such that $(\tilde P_r,\nabla \tilde P_r) \rightharpoonup (\tilde P,\nabla \tilde P)$ weakly
in $\Ld^2(\Omega)$ along some subsequence (not relabelled), with
\begin{equation}\label{e.bd-const-press3}
\expecm{P^2+|\nabla P|^2}\,\lesssim \,|E|^2.
\end{equation}
From \eqref{e.bd-const-press2} and the weak lower-semicontinuity of the $\Ld^2(\Omega)$-norm, we then deduce 
$$
\expecm{|\nabla P-\nabla \bar \Sigma_E|^2\mathds1_{\R^d \setminus \Ic}} \,\le\,\liminf_{r\uparrow \infty} \expecm{|\nabla P_r-\nabla \bar \Sigma_E|^2\mathds1_{\R^d \setminus \Ic}} \,=\,0.
$$
Hence, for almost all $\w$, the limit $P^\w$ coincides with $\bar \Sigma_E^\w$ up to an additive constant on the connected set $\R^d\setminus\Ic^\w$.
We then define the stationary pressure as $\Sigma_E \mathds1_{\R^d \setminus \Ic}:=\big(P-\expecm{P \mathds1_{\R^d \setminus \Ic}}\big) \mathds1_{\R^d \setminus \Ic}$, which satisfies $\expecm{\Sigma_E\mathds1_{\R^d \setminus \Ic}}=0$ and the a priori estimate~(ii).

\medskip\noindent
It remains to give the arguments in favor of \eqref{e.bd-const-press1} and \eqref{e.bd-const-press2}.
We start with the former.
For all $R\ge r\ge1$, for $c^\w=\fint_{B_{R+r}}\bar \Sigma_E^\w$, we have
\begin{eqnarray*}
\fint_{B_R} (P_r^\w)^2+|\nabla P_r^\w|^2&=& \fint_{B_R} (\bar \Sigma_E^\w -\chi_r \ast\bar \Sigma_E^\w)^2+|\nabla \bar \Sigma_E^\w-\chi_r\ast\nabla \bar P^\w_E|^2
\\
&\lesssim&  \fint_{B_R} (\bar \Sigma_E^\w-c^\w)^2+(\chi_r \ast( \bar \Sigma_E^\w-c^\w))^2+ |\nabla \bar \Sigma_E^\w|^2+| \chi_r\ast\nabla \bar \Sigma_E^\w|^2 
\\
&\lesssim& R^{-d} \int_{B_{R+r}} |\nabla \bar \Sigma_E^\w|^2+(\bar \Sigma_E^\w-c^\w)^2.
\end{eqnarray*}
Taking the expectation and using \eqref{e.bd-grad-p} then yields by stationarity of $P_r$,
$$
\expec{ P_r^2+|\nabla P_r|^2} \,\lesssim\,  \frac{(R+r)^d}{R^d} |E|^2,
$$
from which  \eqref{e.bd-const-press1} follows by taking the limit $R\uparrow \infty$.
We turn to~\eqref{e.bd-const-press2}.
By definition of~$P_r$ and
since $|\nabla \chi_r| \lesssim \frac1{r^{d+1}} \mathds1_{B_R}$ for all $R\ge r\ge1$, we have for $c^\w=\fint_{B_{R+r}} \bar \Sigma_E^\w$,
\begin{multline*}
\fint_{B_R} |\nabla P_r^\w-\nabla \bar \Sigma_E^\w|^2 \mathds 1_{\R^d \setminus \Ic^\w}\,\le\,\fint_{B_R}  |\nabla \chi_r * \bar \Sigma_E^\w|^2 
\,=\,\fint_{B_R}  |\nabla \chi_r * (\bar \Sigma_E^\w-c^\w)|^2\\
\,\lesssim\,\frac1r\frac{(R+r)^d}{R^d}  \fint_{B_{R+r}}   (\bar \Sigma_E^\w-c^\w)^2.
\end{multline*}
As before, taking the expectation, recalling that $(\nabla P_r-\nabla\bar \Sigma_E)\mathds 1_{\R^d \setminus \Ic}$ is stationary, using~\eqref{e.bd-grad-p}, and letting $R\uparrow \infty$, we deduce
$$
\expecm{ |\nabla P_r-\nabla \bar \Sigma_E|^2 \mathds 1_{\R^d \setminus \Ic}} \,\lesssim\,  \frac1r   |E|^2,
$$
from which the claim  \eqref{e.bd-const-press2} follows.

\medskip

\substep{4.5} Proof of existence and uniqueness for (i)--(ii).\\
In Step~1, we have shown that if $\psi_E$ is a solution of (i)--(ii), then $\Psi_E=\nabla  \psi_E$ satisfies the abstract problem~\eqref{e.weak-form-corr},
for which existence and uniqueness is proved in Step~2. 
In Step~3, we considered the unique solution $\Psi_E$ of~\eqref{e.weak-form-corr} and proved that $\Psi_E=\nabla \psi_E$ is automatically a weak solution of~\eqref{e.corr-eq} in form of~\eqref{e:back-phys}.
In Substeps~4.1--4.4, we reconstructed a unique stationary pressure field $\Sigma_E$ (with vanishing expectation) such that $\psi_E$ is a classical solution of~\eqref{e.corr-eq}. Uniqueness for~(i)--(ii) then follows from uniqueness for~\eqref{e.weak-form-corr}.
For the existence part for~(i)--(ii), it remains to note that $\Sigma_E$ and $\psi_E$ satisfy (ii) as shown in Substep~4.4. %

\medskip

\step5 Proof of~(iii)--(iv).\\
The convergences in (iii) are a standard application of the ergodic theorem.
The sublinearity (iv) of the corrector $\psi_E^\w$ at infinity is also a standard result  for random fields the gradients of which are stationary and have vanishing expectation, cf.~\cite{PapaVara,JKO94}. 
\end{proof}

\bigskip
\section{Proof of the homogenization result}
This section is devoted to the proof of Theorem~\ref{th:Stokes}, making use of the correctors $(\psi_E)_E$ defined in Proposition~\ref{prop:corr-Stokes} and adapting the classical oscillating test function method by Tartar~\cite{Tartar-09}. We split the proof into eight different steps.

\medskip
\step1 Reformulation of the equations.\\
We show that the solution $u_\e^\w$ of~\eqref{eq:Stokes} satisfies in the weak sense in the whole domain $U$,
\begin{equation}\label{eq:Stokes-re}
-\triangle u_\e^\w+\nabla (P_\e^\w\mathds1_{U\setminus\Ic^\w_\e(U)})=f\mathds1_{U\setminus\Ic_\e^\w(U)}-\sum_{n\in\Nc_\e^\w(U)}\delta_{\e\partial I_n^\w}\sigma(u_\e^\w,P_\e^\w)\nu,
\end{equation}
while the corrector $\psi_E^\w$ satisfies in the whole space $\R^d$,
\begin{equation}\label{eq:corr-re}
-\triangle \psi_E^\w+\nabla (\Sigma_E^\w\mathds1_{\R^d\setminus\Ic^\w})=-\sum_n\delta_{\partial I_n^\w}\sigma(\psi_E^\w+Ex,\Sigma_E^\w)\nu.
\end{equation}
We focus on~\eqref{eq:Stokes-re}, and leave  the proof of~\eqref{eq:corr-re} (which is similar) to the reader.
Since $u_\e^\w$ is divergence-free, an integration by parts yields
\[\int_{U}\nabla\zeta:\nabla u_\e^\w\,=\,2\int_{U}\nabla\zeta:\D(u_\e^\w),\]
and thus, since $\D(u_\e^\w)=0$ in $\Ic_\e^\w(U)$,
\[\int_{U}\nabla\zeta:\nabla u_\e^\w\,=\,2\int_{U\setminus\Ic_\e^\w(U)}\nabla\zeta:\D(u_\e^\w).\]
Recalling the definition $\sigma(u_\e^\w,P_\e^\w)=2\D(u_\e^\w)-P_\e^\w\Id$,
integrating by parts, and using equation~\eqref{eq:Stokes}, we obtain
\begin{eqnarray*}
\int_{U}\nabla\zeta:\nabla u_\e^\w-\int_{U\setminus\Ic_\e^\w(U)}(\Div\zeta)\,P_\e^\w
&=&\int_{U\setminus\Ic_\e^\w(U)}\nabla\zeta:\sigma(u_\e^\w,P_\e^\w)\\
&=&\int_{U\setminus\Ic_\e^\w(U)}\zeta\cdot f-\sum_{n\in\Nc_\e^\w(U)}\int_{\e\partial I_n^\w}\zeta\cdot\sigma(u_\e^\w,P_\e^\w)\nu,
\end{eqnarray*}
that is,~\eqref{eq:Stokes-re}.

\medskip
\step2 Energy estimates.\\
We now show that for almost all $\w$ the solution $u_\e^\w$ of~\eqref{eq:Stokes} satisfies
\begin{equation}\label{e.energy-estim}
\int_U|\nabla u_\e^\w|^2+\int_{U\setminus\Ic_\e^\w(U)}|P_\e^\w|^2\,\lesssim_\delta\,\int_U|f|^2.
\end{equation}
For almost all $\w$, by weak compactness, this allows us to consider $\bar u^\w\in H^1_0(U)^d$ and $\bar Q^\w\in\Ld^2(U)$ such that, along a subsequence (not relabelled) as $\e\downarrow0$,
\begin{equation}\label{eq:conv-up}
u_\e^\w\cvf\bar u^\w\quad\text{ in $H^1_0(U)$,}\qquad\text{and}\qquad P_\e^\w\mathds1_{U\setminus\Ic_\e^\w(U)}\cvf\bar Q^\w\quad\text{ in $\Ld^2(U)$.}
\end{equation}
In particular, by Rellich's theorem, $u_\e^\w\to\bar u^\w$ in $\Ld^2(U)$ strongly.

\medskip\noindent
Here comes the argument for~\eqref{e.energy-estim}. For all $v\in H^1_0(U)$ with $\Div v=0$ in $U$ and with $\D(v)=0$ in $\Ic_\e(U)$, testing the formulation~\eqref{eq:Stokes-re} of the Stokes equation with $v$ yields
\[\int_{U}\nabla v:\nabla u_\e^\w=\int_{U\setminus\Ic_\e^\w(U)}v\cdot f,\]
which for the choice $v=u_\e^\w$ yields
\begin{equation}\label{eq:energy}
\int_{U}|\nabla u_\e^\w|^2\,=\,\int_{U\setminus\Ic_\e^\w(U)}f\cdot u_\e^\w \,\lesssim\, \Big( \int_{U}|f|^2\Big)^\frac12 \Big( \int_{U}|\nabla u_\e^\w|^2\Big)^\frac12
\end{equation}
by Poincar\'e's inequality in $H^1_0(U)$, that is,~\eqref{e.energy-estim} for $\nabla u_\e^\w$.
The corresponding estimate for the pressure is obtained by
a similar argument as in  Substep~4.2 of the proof of Proposition~\ref{prop:corr-Stokes}.

\medskip
\step3 A priori estimates at inclusion boundaries.\\
We claim that the solution $u_\e^\w$ of~\eqref{eq:Stokes} and the corrector $\psi_E^\w$ satisfy for almost all $\w$,
\begingroup\allowdisplaybreaks
\begin{eqnarray}
\sum_{n\in\Nc_\e^\w(U)}\int_{\e\partial I_n^\w}|u_\e^\w|^2&\lesssim&\frac1\e\int_U|f|^2,\label{eq:est-sum-par-pu}\\
\sum_{n\in\Nc_\e^\w(U)}\int_{\e\partial I_n^\w}|\nabla u_\e^\w|^2+|P_\e^\w|^2&\lesssim_\delta&\frac 1\e\int_U|f|^2,\label{eq:est-sum-par-u}\\
\sum_{n\in\Nc_\e^\w(U)}\int_{\e\partial I_n^\w}|\psi_E^\w(\tfrac\cdot\e)|^2&\lesssim_\delta&\frac1\e\int_U|\psi_E^\w(\tfrac\cdot\e)|^2,\label{eq:est-sum-par-cor}\\
\sum_{n\in\Nc_\e^\w(U)}\int_{\e\partial I_n^\w}|\nabla\psi_{E}^\w(\tfrac\cdot\e)|^2+|\Sigma_E^\w(\tfrac\cdot\e)|^2&\lesssim_\delta&\frac1\e\int_U|\nabla\psi_E^\w(\tfrac\cdot\e)|^2+|(\Sigma_E^\w\mathds1_{\R^d\setminus\Ic^\w})(\tfrac\cdot\e)|^2.~~~\label{eq:est-sum-par-cor+}
\end{eqnarray}
\endgroup
We start with the proof of~\eqref{eq:est-sum-par-pu}. For all $n\in\Nc_\e^\w(U)$, since $u_\e^\w$ is affine in $\e I_n^\w$, there holds
\[\int_{\e\partial I_n^\w}|u_\e^\w|^2\,\lesssim\,\frac1\e\int_{\e I_n^\w}|u_\e^\w|^2,\]
so that
\[\sum_{n\in\Nc_\e^\w(U)}\int_{\e\partial I_n^\w}|u_\e^\w|^2\,\lesssim\,\frac1\e\int_{U}|u_\e^\w|^2,\]
and the claim~\eqref{eq:est-sum-par-pu} follows from Poincaré's inequality and~\eqref{e.energy-estim}.
Likewise, for all $n$, since $\psi_E^\w$ is affine in $I_n^\w$, we find
\[\int_{\partial I_n^\w}|\psi_E^\w|^2\lesssim\int_{I_n^\w}|\psi_E^\w|^2,\]
and the claim~\eqref{eq:est-sum-par-cor} follows after summing and rescaling.
We turn to the proof of~\eqref{eq:est-sum-par-u}.
By scaling, it suffices to check that $\hat u_\e^\w:=\e^{-2}u_\e^\w(\e\cdot)$ and $\hat P_\e^\w:=\e^{-1} P_\e^\w(\e\cdot)$ satisfy
\begin{equation}\label{eq:est-sum-par-u+}
\sum_{n\in\Nc_\e^\w(U)}\int_{\partial I_n^\w}|\nabla\hat u_\e^\w|^2+|\hat P_\e^\w|^2\lesssim_\delta\, \frac1{\e^2} \int_{\frac1\e U}|f(\e\cdot)|^2.
\end{equation}
Given $n\in\Nc_\e^\w(U)$, a trace estimate yields
\[\int_{\partial I_n^\w}|\nabla\hat u_\e^\w|^2+|\hat P_\e^\w|^2\lesssim_\delta\|(\nabla\hat u_\e^\w,\hat P_\e^\w)\|_{H^1((I_n^\w+\frac\delta4B)\setminus I_n^\w)}^2.\]
Recalling that the inclusion~$I_n^\w$ is at distance at least $\delta>0$ from other inclusions and from $\frac1\e\partial U$ so that $-\triangle \hat u_\e^\w+\nabla\hat P_\e^\w=f(\e\cdot)$ is satisfied in the annulus $(I_n^\w+\delta B)\setminus I_n^\w$, the regularity theory for the Stokes equation near a boundary in form of~\cite[Theorems~IV.5.1--5.3]{Galdi} leads to the following, with $c_{n,\e}^\w:=\fint_{I_n^\w+\frac\delta2 B}\hat u_\e^\w$,
\begin{multline*}
\int_{\partial I_n^\w}|\nabla\hat u_\e^\w|^2+|\hat P_\e^\w|^2
\lesssim_\delta\|\hat u_\e^\w|_{I_n^\w}-c_{n,\e}^\w\|_{H^\frac32(\partial I_n^\w)}^2+\|f(\e\cdot)\|_{\Ld^2(I_n^\w+\frac\delta2B)}^2
\\+\|\hat P_\e^\w\|_{\Ld^2((I_n^\w+\frac\delta2B)\setminus I_n^\w)}^2+\|\hat u_\e^\w-c_{n,\e}^\w\|_{H^1((I_n^\w+\frac\delta2B)\setminus I_n^\w)}.
\end{multline*}
Since $\hat u_\e^\w$ is affine on $I_n^\w$, we have
\[\|\hat u_\e^\w|_{I_n^\w}-c_{n,\e}^\w\|_{H^\frac32(\partial I_n^\w)}^2\,\lesssim\, \|\hat u_\e^\w-c_{n,\e}^\w\|_{H^2( I_n^\w)}^2 \,=\,\|\hat u_\e^\w -c_{n,\e}^\w\|_{H^1( I_n^\w)}^2,\]
while Poincaré's inequality with mean-value zero yields
\[\|\hat u_\e^\w-c_{n,\e}^\w\|_{H^1(I_n^\w+\frac\delta2B)}\,\lesssim\, 
\|\nabla \hat u_\e^\w\|_{\Ld^2(I_n^\w+\frac\delta2B)}^2,\]
so that the above turns into
\begin{equation*}
\int_{\partial I_n^\w}|\nabla\hat u_\e^\w|^2+|\hat P_\e^\w|^2
\lesssim_\delta\|f(\e\cdot)\|_{\Ld^2(I_n^\w+\frac\delta2B)}^2+\|(\nabla\hat u_\e^\w,\hat P_\e^\w\mathds1_{\R^d\setminus\Ic^\w})\|_{\Ld^2(I_n^\w+\frac\delta2B)}^2.
\end{equation*}
Since the balls of the collection $\{I_n^\w+\frac\delta2B\}_n$ are all disjoint, the rescaled version of the energy estimate~\eqref{e.energy-estim} leads to
\begin{equation*}
\sum_{n\in\Nc_\e^\w(U)}\int_{\partial I_n^\w}|\nabla\hat u_\e^\w|^2+|\hat P_\e^\w|^2\lesssim_\delta\int_{\frac1\e U}|f(\e\cdot)|^2+|\nabla\hat u_\e^\w|^2+|\hat P_\e^\w\mathds1_{\R^d\setminus\Ic^\w}|^2\, \lesssim \, \frac1{\e^2} \int_{\frac1\e U}|f(\e\cdot)|^2,
\end{equation*}
that is,~\eqref{eq:est-sum-par-u+}.
It remains to establish~\eqref{eq:est-sum-par-cor+}.
Applying as above a trace estimate together with the regularity theory for the Stokes equation near a boundary (cf. Substep~4.3
in the proof of Proposition~\ref{prop:corr-Stokes}), we obtain
\begin{equation*}
\sum_{n\in\Nc_\e^\w(U)}\int_{\partial I_n^\w}|\nabla\psi_E^\w|^2+|\Sigma_E^\w|^2
\lesssim_\delta\int_{\frac1\e U}|\nabla\psi_E^\w|^2+|\Sigma_E^\w\mathds1_{\R^d\setminus\Ic^\w}|^2,
\end{equation*}
and the claim~\eqref{eq:est-sum-par-cor+} follows after rescaling.

\medskip
\step4 Oscillating test function method.\\
We show that for all test functions $\bar v\in C^\infty_c(U)^d$ with $\Div\bar v=0$ we have for almost all $\omega$, along a subsequence (not relabelled),
\begin{equation}\label{eq:oscillating-test-fct-res}
2\int_U\D(\bar v):\D(\bar u^\w)+\lim_{\e\downarrow0}\sum_{E\in\Ec}2\int_U(\nabla_E\bar v)\,\D(\psi_E^\w)(\tfrac\cdot\e):\D(u_\e^\w)= (1-\lambda)\int_U\bar v\cdot f,
\end{equation}
where the sum runs over an orthonormal basis $\Ec$ of symmetric trace-free matrices $\Md_0^\Sym$, and where the limit in the left-hand side indeed exists (and is computed in the next step).

\medskip\noindent
Let a typical $\omega\in\Omega$ be fixed such that the bounds of Steps~1--2 hold as well as the convergence~\eqref{eq:conv-up} along a subsequence (not relabelled),
and such that for all $E\in\Md_0$ the corrector $\psi_E^\w$ and corresponding pressure $\Sigma_E^\w$ satisfy
the corrector equation~\eqref{eq:corr-Stokes} in the classical sense as well as the properties~(iii)--(iv) of Proposition~\ref{prop:corr-Stokes}.
Given a test function $\bar v\in C^\infty_c(U)^d$ with $\Div\bar v=0$, we follow Tartar's ideas and define its oscillatory version $v_\e^\w\in H^1_0(U)^d$ via
\[v_\e^\w:=\bar v+\sum_{E\in\Ec}\e\psi_E^\w(\tfrac\cdot\e)\nabla_E\bar v,\]
where we recall the notation $\nabla_E\bar v=E:\nabla\bar v$.
Testing equation~\eqref{eq:Stokes-re} with $v_\e^\w$ leads to
\begin{multline}\label{eq:test-osc}
\int_U\nabla v_\e^\w:\nabla u_\e^\w-\int_{U\setminus\Ic_\e^\w(U)}(\Div v_\e^\w)\,P_\e^\w\\
=\int_{U\setminus\Ic_\e^\w(U)}v_\e^\w\cdot f
-\sum_{n\in\Nc_\e^\w(U)}\int_{\e\partial I_n^\w}v_\e^\w\cdot\sigma(u_\e^\w,P_\e^\w)\nu,
\end{multline}
and it remains to examine each of the four terms appearing in this identity.
\begin{enumerate}[$\bullet$]
\item First, an integration by parts with $\Div u_\e^\w=0$ yields
\[\quad\int_U\nabla v_\e^\w:\nabla u_\e^\w\,=\,2\int_U\D(v_\e^\w):\D(u_\e^\w),\]
and then inserting the definition of $v_\e^\w$,
\begin{multline*}
\quad\int_U\nabla v_\e^\w:\nabla u_\e^\w=2\int_U\D(\bar v):\D(u_\e^\w)+\sum_{E\in\Ec}2\int_U\e\psi_E^\w(\tfrac\cdot\e)\otimes\nabla\nabla_E\bar v:\D(u_\e^\w)\\
+\sum_{E\in\Ec}2\int_U(\nabla_E\bar v)\,\D(\psi_E^\w)(\tfrac\cdot\e):\D(u_\e^\w).
\end{multline*}
By Step~2, the first right-hand side term converges to $2\int_U\D(\bar v):\D(\bar u^\w)$. By sublinearity of~$\psi_E^\w$ (cf.\@ Proposition~\ref{prop:corr-Stokes}(iv)), together with the boundedness of $\nabla u_\e^\w$ in $\Ld^2(U)$ (cf.~Step~2), the second right-hand side term converges to $0$.
\smallskip\item Second, the definition of $v_\e^\w$ with $\Div\bar v=0$ and $\Div\psi_E^\w=0$ leads to
\[\quad\int_{U\setminus\Ic_\e^\w(U)}(\Div v_\e^\w)\,P_\e^\w=\sum_{E\in\Ec}\int_{U\setminus\Ic_\e^\w(U)}\e\psi_E^\w(\tfrac\cdot\e)\cdot P_\e^\w\nabla\nabla_E\bar v,\]
which converges to $0$ in view of the sublinearity of $\psi_E^\w$ (cf.~Proposition~\ref{prop:corr-Stokes}(iv)) together with the boundedness of $P_\e^\w$ in $\Ld^2(U)$ (cf.~Step~2).
\smallskip\item Third, the sublinearity of $\psi_E^\w$ (cf.~Proposition~\ref{prop:corr-Stokes}(iv)) implies $v_\e^\w \to\bar v $ in $\Ld^2(U)$ and the ergodic theorem for the inclusion process yields $\mathds{1}_{U \setminus \Ic^\w_\e(U)}\cvf\expecm{\mathds{1}_{\R^d\setminus\Ic}}\mathds1_U=(1-\lambda)\mathds1_U$ weakly-* in $\Ld^\infty(U)$ for typical~$\w$, so that $\int_{U\setminus\Ic^\w_\e(U)}v_\e^\w\cdot f\to(1-\lambda)\int_{U}\bar v\cdot f$.
\smallskip\item Fourth, for $n\in\Nc_\e^\w(U)$, 
the oscillating test function $v_\e^\w$ can be expanded as follows, for all $x \in \e\partial I_n^\w$,
\begin{multline*}
\quad\Big|v_\e^\w(x)- \bar v(\e x_n^\w)-\nabla \bar v(\e x_n^\w)\,(x-\e x_n^\w)-\sum_{E\in\Ec}\e\psi_E(\tfrac x\e)\nabla_E \bar v(\e x_n^\w) \Big|\\
\lesssim \e^2\|\nabla^2 \bar v\|_{\Ld^\infty} \max_{E\in\Ec}(1+|\psi_E(\tfrac x\e)|).
\end{multline*}
Setting for abbreviation $\Theta_{\e,n}^\w:=(\nabla\bar v-\D(\bar v))(\e x_n^\w)\in\Md^\Skew$, and recalling the choice $\Tr\D(\bar v)=\Div\bar v=0$, this can be reorganized as
\begin{multline*}
\quad\Big|v_\e^\w(x)
-\bar v(\e x_n^\w)-\Theta_{\e,n}^\w(x-\e x_n^\w)
- \sum_{E\in\Ec}\e\big( \psi_E^\w(\tfrac x\e)+E (\tfrac x\e-x_n^\w)\big)\nabla_E \bar v(\e x_n^\w)\Big|\\
\lesssim\e^2\|\nabla^2 \bar v\|_{\Ld^\infty} \max_{E\in\Ec}(1+|\psi_E(\tfrac x\e)|).
\end{multline*}
Inserting this approximation of $v_\e^\w$ on $\e\partial I_n^\w$, and recalling that $\psi_E^\w+E (\cdot-x_n^\w)$ is a rigid motion on $I_n^\w$, the boundary conditions for $u_\e^\w$ on $\e\partial I_n^\w$ lead to
%
\begin{equation*}
\quad\Big|\int_{\e\partial I_n^\w}v_\e^\w\cdot\sigma(u_\e^\w,P_\e^\w)\nu\Big| \, \lesssim\, \e^2\|\nabla^2 \bar v\|_{\Ld^\infty}\max_{E\in\Ec}\int_{\e\partial I_n^\w} (1+|\psi_E^\w(\tfrac\cdot\e)|)\,(|\nabla u_\e^\w|+|P_\e^\w|).
\end{equation*}
Summing over $n$ and using Cauchy-Schwarz' inequality and the estimates~\eqref{eq:est-sum-par-u} and~\eqref{eq:est-sum-par-cor} of Step~3, we obtain
\begin{equation*}
\quad\Big|\sum_{n\in\Nc_\e^\w(U)} \int_{\e\partial I_n^\w}v_\e^\w\cdot \sigma(u_\e^\w,P_\e^\w)\nu\Big| \, \lesssim\, \|\nabla^2 \bar v\|_{\Ld^\infty}
\Big(\max_{E\in\Ec}\int_{U} (\e+|\e\psi_E^\w(\tfrac\cdot\e)|)^2\Big)^\frac12 \Big( \int_{U} |f|^2\Big)^\frac12,
\end{equation*}
where the right-hand side tends to $0$ by the sublinearity of $\psi_E^\w$ at infinity (cf.~Proposition~\ref{prop:corr-Stokes}(iv)).
\end{enumerate}
Inserting the above estimates into~\eqref{eq:test-osc}, the claim~\eqref{eq:oscillating-test-fct-res} follows.

\medskip
\step5 Computation of the limit in~\eqref{eq:oscillating-test-fct-res} by compensated compactness.\\
For all $\bar v\in C^\infty_c(U)^d$ with $\Div\bar v=0$, we claim that for almost all $\omega$,
\begin{equation}\label{e.step5}
\lim_{\e \downarrow 0}\sum_{E\in\Ec}2\int_U(\nabla_E\bar v)\,\D(\psi_E^\w)(\tfrac\cdot\e):\D(u_\e^\w)\,=\,\sum_{E\in\Ec}\expec{Z_E}:\int_{U}\bar u^\w\otimes\nabla\nabla_E\bar v,
\end{equation}
in terms of the (matrix-valued) stationary random field $Z_{E}$ defined componentwise by
\begin{equation}\label{eq:def-ZE}
Z_{E}^\w:=-\sum_n \frac{\mathds1_{I_n^\w}}{|I_n^\w|}\int_{\partial I_n^\w} \sigma( \psi_E^\w+Ex,\Sigma_E^\w)\nu\otimes(x-x_n^\w).
\end{equation}
Integrating by parts, using equation~\eqref{eq:corr-re} for the corrector, and the constraint $\Div u_\e^\w=0$, we may rewrite the product $\D(\psi_E^\w)(\tfrac\cdot\e):\D(u_\e^\w)$ of two weakly convergent sequences as 
\begin{multline*}
2\int_U(\nabla_E\bar v)\D(\psi_E^\w)(\tfrac\cdot\e):\D(u_\e^\w)
\,=\,-\int_{U\setminus\e\Ic^\w}(u_\e^\w\otimes\nabla\nabla_E\bar v):\big(2\D(\psi_E^\w)-\Sigma_E^\w\big)(\tfrac\cdot\e)\\
-\sum_n\int_{U\cap\e\partial I_n^\w}(\nabla_E\bar v)\,u_\e^\w\cdot\sigma\big(\e\psi_E^\w(\tfrac\cdot\e)+Ex,\Sigma_E^\w(\tfrac\cdot\e)\big)\nu.
\end{multline*}
By the ergodic theorem in form of $\nabla\psi_E^\w(\tfrac\cdot\e)\cvf0$ and $\Sigma_E^\w(\tfrac\cdot\e)\cvf0$ in $\Ld^2(U)$ (cf.~Proposition~\ref{prop:corr-Stokes}(iii)) and by the strong convergence $u_\e^\w\to\bar u^\w$ in $\Ld^2(U)$, the first right-hand side term converges to $0$. Hence, the limit of interest (which exists by~\eqref{eq:oscillating-test-fct-res}) takes the form
\begin{equation}\label{e.intermed}
L^\w:=\lim_{\e \downarrow 0}\sum_{E\in\Ec}2\int_U(\nabla_E\bar v)\,\D(\psi_E^\w)(\tfrac\cdot\e):\D(u_\e^\w)\,=\,\lim_{\e \downarrow 0}I_\e^\w,
\end{equation}
where $I_\e^\w$ denotes the third  and main right-hand side term in the above,
\[I_\e^\w:=-\sum_{E\in\Ec}\sum_n\int_{U\cap\e\partial I_n^\w}(\nabla_E\bar v)\,u_\e^\w\cdot\sigma\big(\e\psi_E^\w(\tfrac\cdot\e)+Ex,\Sigma_E^\w(\tfrac\cdot\e)\big)\nu.\]
Since $\bar v$ is compactly supported in $U$, we may restrict to $\e$ small enough such that $\bar v$ is supported in $\{x\in U:d(x,\partial U)>\e\}$, so that $\bar v$ vanishes on $U\cap\e\partial I_n^\w$ for $n\notin\Nc_\e^\w(U)$. The above thus becomes
\[I_\e^\w=-\sum_{E\in\Ec}\sum_{n\in\Nc_\e^\w(U)}\int_{\e\partial I_n^\w}(\nabla_E\bar v)\, u_\e^\w\cdot\sigma\big(\e\psi_E^\w(\tfrac\cdot\e)+Ex,\Sigma_E^\w(\tfrac\cdot\e)\big)\nu.\]
For $n\in\Nc_\e^\w(U)$, since $u_\e^\w$ is a rigid motion in $\e I_n^\w$, the boundary conditions for the corrector~$\psi_E^\w$ on $\partial I_n^\w$ ensure that
\[\int_{\e\partial I_n^\w}u_\e^\w\cdot\sigma\big(\e\psi_E^\w(\tfrac\cdot\e)+Ex,\Sigma_E^\w(\tfrac\cdot\e)\big)\nu=0,\]
which allows to  reformulate $I_{\e}^\w$ as
\begin{equation}\label{eq:id-0-repl}
I_{\e}^\w\,=\,-\sum_{E\in\Ec}\sum_{n\in\Nc_\e^\w(U)}\int_{\e\partial I_n^\w}\big(\nabla_E\bar v-\nabla_E\bar v(\e x_n^\w)\big)
\,u_\e^\w
\cdot\sigma\big(\e\psi_E^\w(\tfrac\cdot\e)+Ex,\Sigma_E^\w(\tfrac\cdot\e)\big)\nu.
\end{equation}
For $n\in\Nc_\e^\w(U)$, since $u_\e^\w$ is affine in $\e I_n^\w$, we can write on $\e\partial I_n^\w$,
\begin{equation*}
u_\e^\w=\Big(\fint_{\e I_n^\w}u_\e^\w\Big)+(x-\e x_n^\w)_i\nabla_i u_\e^\w,
\end{equation*}
so that
\begin{multline}\label{eq:id-1-repl}
\Big|\big(\nabla_E\bar v-\nabla_E\bar v(\e x_n^\w)\big)u_\e^\omega-(x-\e x_n^\w)_i\Big(\fint_{\e I_n^\w}u_\e^\w\,\nabla_i\nabla_E\bar v \Big)\Big|\\
\lesssim\e^2(|u_\e^\w|+|\nabla u_\e^\w|)\big(\|\nabla^3\bar v\|_{\Ld^\infty(U)}+\|\nabla^2\bar v\|_{\Ld^\infty(U)}\big).
\end{multline}
Next, appealing to the estimates~\eqref{eq:est-sum-par-pu}, \eqref{eq:est-sum-par-u}, and~\eqref{eq:est-sum-par-cor+} of Step~3, we obtain
\begin{eqnarray}
\lefteqn{\sum_{n\in\Nc_\e^\w(U)}\e^2\int_{\e\partial I_n^\w}\big|\sigma\big(\e\psi_E^\w(\tfrac\cdot\e)+Ex,\Sigma_E^\w(\tfrac\cdot\e)\big)\big| \big(|u_\e^\w|+|\nabla u_\e^\w|\big)}\nonumber\\
&\lesssim&\e\bigg(\sum_{n\in\Nc_\e^\w(U)}\e\int_{\e\partial I_n^\w}|\nabla\psi_E^\w(\tfrac\cdot\e)+E|^2+|\Sigma_E^\w(\tfrac\cdot\e)|^2\bigg)^\frac12\bigg(\sum_{n\in\Nc_\e^\w(U)}\e\int_{\e\partial I_n^\w}|u_\e^\w|^2+|\nabla u_\e^\w|^2\bigg)^{\frac12}\nonumber\\
&\lesssim_\delta&\e\, \|f\|_{\Ld^2(U)}\Big(\int_{U}|E|^2+|\nabla\psi_E^\w(\tfrac\cdot\e)|^2+|(\Sigma_E^\w\mathds1_{\R^d\setminus\Ic^\w})(\tfrac\cdot\e)|^2\Big)^\frac12.\label{eq:est-err-loc}
\end{eqnarray}
Inserting~\eqref{eq:id-1-repl} into~\eqref{e.intermed} and~\eqref{eq:id-0-repl}, and using the above to estimate the errors together with the boundedness statement of Proposition~\ref{prop:corr-Stokes}(iii), we are led to
\begin{multline*}
L^\w=\lim_{\e\downarrow0}I_{\e}^\w\,=\,-\lim_{\e\downarrow0}\sum_{E\in\Ec}\sum_{n\in\Nc_\e^\w(U)}\Big(\fint_{\e I_n^\w}u_\e^\w\,\nabla_i\nabla_E\bar v \Big)\\
\cdot\Big(\int_{\e\partial I_n^\w}(x-\e x_n^\w)_i\,\sigma\big(\e\psi_E^\w(\tfrac\cdot\e)+Ex,\Sigma_E^\w(\tfrac\cdot\e)\big)\nu\Big).
\end{multline*}
Recalling that for $\e$ small enough the test function $\bar v$ vanishes on $\e I_n^\w$ for $n\notin\Nc_\e^\w(U)$,
we can rewrite
\begin{equation*}
L^\w\,=\,\lim_{\e\downarrow0}\sum_{E\in\Ec}\int_{U}(u_\e^\w\otimes\nabla\nabla_E\bar v) : Z_E^\w(\tfrac\cdot\e),
\end{equation*}
in terms of the (matrix-valued) stationary field $Z_E$ defined in~\eqref{eq:def-ZE}.
Since $Z_E$ is stationary and bounded in $\Ld^2(\Omega)$, the ergodic theorem ensures $Z_E^\w(\frac \cdot \e)\cvf\expec{Z_E}$ in $\Ld^2(U)$ for typical~$\w$. Combining this with the strong convergence $u_\e^\w\to\bar u^\w$ in $\Ld^2(U)$,
the claim~\eqref{e.step5} follows.

\medskip
\step6 Identification of $\expec{Z_E}$: for all $E\in\Md_0^\Sym$,
\begin{equation}\label{eq:EZE}
\expec{Z_E}~=~2(\Id-\Bb)E-(\bb:E)\Id,
\end{equation}
where $\Bb$ and $\bb$ are defined in~\eqref{eq:def-B} and~\eqref{eq:def-b}.

\medskip\noindent
Let $E\in\Md_0^\Sym$ be fixed.
First note that for any skew-symmetric matrix $E'\in\Md^\Skew$ the definition of $Z_E$ and the boundary conditions for $\psi_E$ entail $E':Z_E=0$ almost surely.
Also note that the definition~\eqref{eq:def-b} of $\bb$ takes the form $\bb:E=-\frac1d\expec{\Tr Z_E}$.
It then suffices to prove~\eqref{eq:EZE} when testing with symmetric trace-free matrices, that is, for all $E'\in\Md^\Sym_0$,
\begin{equation}\label{eq:EZE-toprove}
E':\expec{Z_E}\,=\,E':2(\Id-\Bb)E.
\end{equation}
Let $E'\in\Md^\Sym_0$ be fixed.
For $\eta>0$, choose a cut-off function $\chi_\eta\in C^\infty_c(B)$ with $0\le\chi_{\eta}\le1$ pointwise, with $\chi_{\eta}=1$ on $B_{1-\eta}$, and with $|\nabla\chi_{\eta}|\lesssim\frac1\eta$.
For $0<\e<\frac14\eta$, in view of the hardcore condition, we can construct a modification $\chi_{\e,\eta}^\w\in C^\infty_c(B)$ of $\chi_\eta$ that satisfies the same properties as $\chi_\eta$, such that in addition $\chi_{\e,\eta}^\w$ is constant in each inclusion of the collection $\{\e I_n^\w\}_n$, vanishes in inclusions $\e I_n^\w$ with $n\notin\Nc_\e(B)$, and such that $\chi_{\e,\eta}^\w\to\chi_\eta$ in $\Ld^\infty(B)$ as $\e\downarrow0$.
The ergodic theorem yields for almost all $\w$,
\[E':\expec{Z_E}=\lim_{\eta\downarrow0}\lim_{\e\downarrow0}\fint_B\chi_{\e,\eta}^\w (E':Z_E^\w)(\tfrac\cdot\e).\]
Injecting the definition~\eqref{eq:def-ZE} of $Z_E$ yields
\begin{multline*}
E':\expec{Z_E}\\
\,=\, -\frac1{|B|}\lim_{\eta\downarrow0}\lim_{\e\downarrow0}\sum_{n\in\Nc_\e^\w(B)}\Big(\fint_{\e I_n^\w}\chi_{\e,\eta}^\w \Big)\Big(\int_{\e\partial I_n^\w}E'(x-\e x_n^\w)\cdot\sigma\big(\e\psi_E^\w(\tfrac\cdot\e)+Ex,\Sigma_E^\w(\tfrac\cdot\e)\big)\nu\Big).
\end{multline*}
Since for all $n$ the corrector $\psi_{E'}^\w$ has the form $\kappa_n^\w+\Theta_n^\w(x-x_n^\w)-E'(x-x_n^\w)$ on $I_n^\w$ for some $\kappa_n^\w\in\R^d$ and $\Theta_n^\w\in\Md^\Skew$, the boundary conditions for $\psi_E^\w$ on $\partial I_n^\w$ allow to rewrite
\begin{equation*}
E':\expec{Z_E}\,=\,\frac1{|B|}\lim_{\eta\downarrow0}\lim_{\e\downarrow0}\sum_{n\in\Nc_\e(B)}\Big(\fint_{\e I_n^\w}\chi_{\e,\eta}^\w \Big)\Big(\e\int_{\e\partial I_n^\w}\psi_{E'}^\w(\tfrac\cdot\e)\cdot\sigma\big(\e\psi_E^\w(\tfrac\cdot\e)+Ex,\Sigma_E^\w(\tfrac\cdot\e)\big)\nu\Big).
\end{equation*}
Since $\chi_{\e,\eta}^\w$ is constant in each inclusion and vanishes in inclusions $\e I_n^\w$ with $n\notin \Nc_\e(B)$, this is equivalently written as
\begin{equation*}
E':\expec{Z_E}\,=\,\frac1{|B|}\lim_{\eta\downarrow0}\lim_{\e\downarrow0}\sum_{n}\e\int_{\e\partial I_n^\w}\chi_{\e,\eta}^\w\,\psi_{E'}^\w(\tfrac\cdot\e)\cdot\sigma\big(\e\psi_E^\w(\tfrac\cdot\e)+Ex,\Sigma_E^\w(\tfrac\cdot\e)\big)\nu.
\end{equation*}
Using equation~\eqref{eq:corr-re} for the corrector $\psi_E^\w$ together with $\Div\psi_{E'}^\w=0$, in form of
\begin{multline*}
\sum_n\e\int _{\e\partial I_n^\w}\chi_{\e,\eta}^\w\,\psi_{E'}^\w(\tfrac\cdot\e)\cdot\sigma\big(\e\psi_E^\w(\tfrac\cdot\e)+Ex,\Sigma_E^\w(\tfrac\cdot\e)\big)\nu\\
=-\int_B\chi_{\e,\eta}^\w\,\nabla\psi_{E'}^\w(\tfrac\cdot\e):\nabla\psi_E^\w(\tfrac\cdot\e)-\int_B\e\psi_{E'}^\w(\tfrac\cdot\e)\otimes\nabla\chi_{\e,\eta}^\w:(\nabla\psi_E^\w-\Sigma_E^\w\Id\mathds1_{\R^d\setminus\Ic^\w})(\tfrac\cdot\e),
\end{multline*}
and noting that the second right-hand side term converges to $0$ as $\e\downarrow0$ in view of the sublinearity of $\psi_{E'}$ (cf.~Proposition~\ref{prop:corr-Stokes}(iv)) and in view of the boundedness statement of Proposition~\ref{prop:corr-Stokes}(iii), we deduce
\begin{equation*}
E':\expec{Z_E}\,=\,-\lim_{\eta\downarrow0}\lim_{\e\downarrow0}\fint_B\chi_{\e,\eta}^\w\,\nabla\psi_{E'}^\w(\tfrac\cdot\e):\nabla\psi_E^\w(\tfrac\cdot\e).
\end{equation*}
Equivalently, again integrating by parts and using that $\Div\psi_{E'}^\w=0$, we have
\begin{equation*}
E':\expec{Z_E}\,=\,-\lim_{\eta\downarrow0}\lim_{\e\downarrow0}2\fint_B\chi_{\e,\eta}^\w\,\D(\psi_{E'}^\w)(\tfrac\cdot\e):\D(\psi_E^\w)(\tfrac\cdot\e).
\end{equation*}
In view of the ergodic theorem and of the definition~\eqref{eq:def-B} of $\Bb$, this yields the claim~\eqref{eq:EZE-toprove} in form of
\begin{equation}\label{eq:ident-ZE-1}
E':\expec{Z_E}\,=\,-2\,\expec{\D(\psi_{E'}):\D(\psi_E)}\,=\,E':2(\Id-\Bb)E.
\end{equation}

\medskip
\step7  Conclusion: convergence result.\\
Combining the results of Steps~4--6, we conclude that for almost all $\w$ there holds $u_\e^\w\cvf\bar u^\w$ weakly in $H^1_0(U)$ as $\e\downarrow0$ along a subsequence, where the limit $\bar u^\w$ satisfies $\Div\bar u^\w=0$ and, for all $\bar v\in C^\infty_c(U)^d$ with $\Div\bar v=0$,
 \[\int_U \D(\bar v) : 2\Bb \D(\bar u^\w)
=(1-\lambda)\int_U\bar v\cdot f,\]
where we recall that $\Bb$ is defined in~\eqref{eq:def-B}.
Note that $\Bb$ is positive definite on $\Md_0^\Sym$:
by linearity of the corrector $E\mapsto \psi_E$ with $\expec{\nabla\psi_E}=0$, we compute for all $E\in\Md_0^\Sym$,
\[E:\Bb E \,=\, |E|^2+\expec{|\nabla\psi_{E}|^2} \,\ge\, |E|^2.\]
Hence,  $\bar u^\w\in H^1_0(U)$ is a weak solution of the following well-posed steady Stokes equation in $U$,
 \[-\Div2\Bb\D(\bar u^\w)+\nabla\bar P^\w=(1-\lambda)f,\qquad\Div\bar u^\w=0,\]
in the sense of \cite[Definition~IV.1.1]{Galdi}.  
In addition, by~\cite[Lemma~IV.1.1]{Galdi},  there exists a unique pressure field $\bar P^\w \in \Ld^2(U)$
with $\int_{U}\bar P^\w=0$ such that this equation holds in the usual weak sense.
By uniqueness for the above problem (e.g.~\cite[Theorem~IV.1.1]{Galdi}), the solution $(\bar u^\w,\bar P^\w)=(\bar u,\bar P)$ is independent of $\w$ and the whole sequence converges. The convergence for the pressure field follows from the corrector result below combined with an approximation argument, cf.~Substep~8.4.

\medskip
\step8 Corrector results.\\
We finally turn to the additional corrector results, which we obtain by a suitable recycling of the above computations.
We consider the following two-scale expansion errors,
\begin{eqnarray*}
w_\e^\w&:=&u_\e^\w-\bar u-\e\sum_{E\in\Ec}\psi_E^\w(\tfrac\cdot\e)\nabla_E\bar u,\\
Q_\e^\w&:=&P_\e^\w\mathds1_{U\setminus\Ic_\e^\w(U)}-\bar P-\bb:\D(\bar u)-(\Sigma_E^\w\mathds1_{\R^d\setminus\e\Ic^\w})(\tfrac\cdot\e)\nabla_E\bar u,
\end{eqnarray*}
and we split the proof into four further substeps: we start with the short proof of the corrector result for the velocity field, that is, $w_\e^\w\to0$ in $H^1(U)$, based on the convergence of the energy, and then we establish a suitable equation for $w_\e^\w$, from which we deduce a bound on the pressure $Q_\e^\w$ and the corresponding corrector result.
In the first three substeps, we assume for simplicity that the homogenized solution $\bar u$ belongs to $W^{3,\infty}(U)^d$, an assumption 
that we relax in the   last substep.

\medskip
\substep{8.1} Corrector result for the velocity field.\\
First, combining~\eqref{eq:energy} with the strong convergence $u_\e^\w\to\bar u$ in $\Ld^2(U)$ yields  for almost all~$\w$ the convergence of energies in the form
\[2\int_U|\D(u_\e^\w)|^2=\int_U|\nabla u_\e^\w|^2=\int_{U\setminus\Ic_\e^\w(U)}f\cdot u_\e^\w\to(1-\lambda)\int_{U}f\cdot\bar u=2\int_U\D(\bar u):\Bb\D(\bar u).\]
Second, using the constraint $\Tr\nabla\bar u=\Div\bar u=0$ in the form $\D(\bar u)=\sum_{E\in\Ec}(\nabla_E\bar u)E$, and appealing to the stationarity of $\nabla\psi_E$, the ergodic theorem, and the sublinearity of $\psi_E$ (cf.~Proposition~\ref{prop:corr-Stokes}(iv)),  together with the additional regularity of $\bar u$, we find for almost all~$\w$,
\begin{multline*}
\int_U\Big|\D\Big(\bar u+\e\sum_{E\in\Ec}\psi_E^\w(\tfrac\cdot\e)\nabla_E\bar u\Big)\Big|^2
=\int_U\Big|\sum_{E\in\Ec}(\D(\psi_E^\w)+E)(\tfrac\cdot\e)\nabla_E\bar u+\sum_{E\in\Ec}\big(\e\psi_E^\w(\tfrac\cdot\e)\otimes\nabla\nabla_E\bar u\big)^s\Big|^2\\
\to\int_U\D(\bar u):\Bb\D(\bar u).
\end{multline*}
Third, choosing $\bar v=\bar u$ as a test function, the computations of Steps~5--6  together with the regularity of $\bar u$ precisely yield for almost all $\w$,
\begin{multline*}
\int_U\D\Big(\bar u+\e\sum_{E\in\Ec}\psi_E^\w(\tfrac\cdot\e)\nabla_E\bar u\Big):\D( u_\e^\w)\\
=\sum_{E\in\Ec}\int_U(\nabla_E\bar u)\,(\D(\psi_E^\w)+E)(\tfrac\cdot\e):\D(u_\e^\w)+\sum_{E\in\Ec}\int_U\e\psi_E^\w(\tfrac\cdot\e)\otimes\nabla\nabla_E\bar u:\D( u_\e^\w)\\
\to\int_U\D(\bar u):\Bb\D(\bar u).
\end{multline*}
Combining the above and reconstructing the square lead to the stated corrector result for the velocity field.

\medskip

\substep{8.2} Equation for the two-scale expansion error.\\
We claim that $(w_\e^\w,Q_\e^\w)$ satisfies in the weak sense in $U$,
\begin{multline}\label{eq:error-pde}
-\triangle w_\e^\w+\nabla Q_\e^\w=-\sum_{n\in\Nc_\e^\w(U)}\delta_{\e\partial I_n^\w}\sigma(u_\e^\w,P_\e^\w)\nu\\
-\Div2(\Bb-\Id)\D(\bar u)-\nabla(\bb:\D(\bar u))+\sum_{E\in\Ec}\nabla_E\bar u\sum_n\delta_{\e\partial I_n^\w}\sigma\big(\e\psi_E^\w(\tfrac\cdot\e)+Ex,\Sigma_E^\w(\tfrac\cdot\e)\big)\nu\\
+(\lambda-\mathds1_{\Ic_\e^\w(U)})f-\sum_{E\in\Ec}(\Sigma_E^\w\mathds1_{\R^d\setminus\e\Ic^\w})(\tfrac\cdot\e)\nabla\nabla_E\bar u+\Div\Big(\sum_{E\in\Ec}\e\psi_E^\w(\tfrac\cdot\e)\otimes\nabla\nabla_E\bar u\Big).
\end{multline}
Combining equations~\eqref{eq:Stokes-re} and~\eqref{eq:corr-re} indeed yields
\begin{multline*}
-\triangle w_\e^\w+\nabla Q_\e^\w=\triangle\bar u-\nabla\bar P-\nabla(\bb:\D(\bar u))+f\mathds1_{U\setminus\Ic_\e^\w(U)}-\sum_{n\in\Nc_\e^\w(U)}\delta_{\e\partial I_n^\w}\sigma(u_\e^\w,P_\e^\w)\nu\\
+\sum_{E\in\Ec}\nabla_E\bar u\sum_n\delta_{\e\partial I_n^\w}\sigma\big(\e\psi_E^\w(\tfrac\cdot\e)+Ex,\Sigma_E^\w(\tfrac\cdot\e)\big)\nu\\
-\sum_{E\in\Ec}(\Sigma_E^\w\mathds1_{\R^d\setminus\Ic^\w})(\tfrac\cdot\e)\nabla\nabla_E\bar u+\sum_{E\in\Ec}\nabla\psi_E^\w(\tfrac\cdot\e)\nabla\nabla_E\bar u+\e\sum_{E\in\Ec}\psi_E^\w(\tfrac\cdot\e)\triangle\nabla_E\bar u,
\end{multline*}
and the claim follows after inserting the equation for $\bar u$ and recombining the last two right-hand side terms.

\medskip
\substep{8.3} Corrector result for the pressure field.\\
As in Substep~4.2 of the proof of Proposition~\ref{prop:corr-Stokes}, for almost all $\w$, we can construct a map $\zeta_\e^\w\in H^1_0(U)$ such that $\zeta_\e^\w|_{\e I_n^\w}$ is constant for all $n\in\Nc_\e^\w(U)$ and such that
\begin{gather}
\Div\zeta_\e^\w=\Big(Q_\e^\w-\fint_{U\setminus\Ic_\e^\w(U)}Q_\e^\w\Big)\mathds1_{U\setminus\Ic_\e^\w(U)},\label{eq:def-zeta-eps1}\\
\|\nabla\zeta_\e^\w\|_{\Ld^2(U)}\lesssim\Big\|Q_\e^\w-\fint_{U\setminus\Ic_\e^\w(U)}Q_\e^\w\Big\|_{\Ld^2(U\setminus\Ic_\e^\w(U))}.\label{eq:def-zeta-eps2}
\end{gather}
Testing equation~\eqref{eq:error-pde} with $\zeta_\e^\w$, using the boundary conditions for $u_\e^\w$ at inclusion boundaries, and recalling that $\zeta_\e^\w$ is constant on each inclusion $\e I_n^\w$ with $n\in\Nc_\e^\w(U)$, we find
\begin{multline*}
\int_U\nabla\zeta_\e^\w:\nabla w_\e^\w-\int_U(\Div\zeta_\e^\w)\, Q_\e^\w=
\int_U\D(\zeta_\e^\w):2(\Bb-\Id)\D(\bar u)+\int_U(\Div \zeta_\e^\w)\,\bb:\D(\bar u)\\
+\sum_{E\in\Ec}\sum_{n}\int_{\e\partial I_n^\w}(\nabla_E\bar u)\,\zeta_\e^\w\cdot\sigma\big(\e \psi_E^\w(\tfrac\cdot\e)+Ex,\Sigma_E^\w(\tfrac\cdot\e)\big)\nu\\
+\int_U(\lambda-\mathds1_{\Ic_\e^\w(U)})\,\zeta_\e^\w\cdot f-\sum_{E\in\Ec}\int_{U}(\Sigma_E^\w\mathds1_{\R^d\setminus\Ic^\w})(\tfrac\cdot\e)\,\zeta_\e^\w\cdot\nabla\nabla_E\bar u-\int_U\nabla\zeta_\e^\w:\sum_{E\in\Ec}\e\psi_E^\w(\tfrac\cdot\e)\otimes\nabla\nabla_E\bar u.
\end{multline*}
In view of properties~\eqref{eq:def-zeta-eps1} and~\eqref{eq:def-zeta-eps2} of the test function $\zeta_\e^\w$, we deduce after reorganizing the terms,
\begin{equation}\label{eq:bound-press-q}
\int_{U\setminus\Ic_\e^\w(U)}\Big(Q_\e^\w-\fint_{U\setminus\Ic_\e^\w(U)}Q_\e^\w\Big)^2
\,\lesssim\,\sum_{j=1}^5T_{\e,j}^\w,
\end{equation}
where
\begingroup\allowdisplaybreaks
\begin{eqnarray*}
T_{\e,1}^\w&:=&\int_U|\nabla w_\e^\w|^2,\\
T_{\e,2}^\w&:=&\Big|\int_U(\lambda-\mathds1_{\Ic_\e^\w(U)})\,\zeta_\e^\w\cdot f\Big|+\sum_{E\in\Ec}\Big|\int_{U}(\Sigma_E^\w\mathds1_{\R^d\setminus\e\Ic^\w})(\tfrac\cdot\e)\,\zeta_\e^\w\cdot\nabla\nabla_E\bar u\Big|,\\
T_{\e,3}^\w&:=&\sum_{E\in\Ec}\int_U\e|\psi_E^\w(\tfrac\cdot\e)||\nabla\zeta_\e^\w||\nabla\nabla_E\bar u|,\\
T_{\e,4}^\w&:=&\sum_{E\in\Ec}\sum_{n\notin\Nc_\e^\w(U)}\int_{\e\partial I_n^\w}|\nabla_E\bar u|\,|\zeta_\e^\w|\big|\sigma\big(\e\psi_E^\w(\tfrac\cdot\e)+Ex,\Sigma_E^\w(\tfrac\cdot\e)\big)\big|,\\
T_{\e,5}^\w&:=&\bigg|\int_U\D(\zeta_\e^\w):2(\Bb-\Id)\D(\bar u)+\int_U(\Div\zeta_\e^\w)\,\bb:\D(\bar u)\\
&&\qquad+\sum_{E\in\Ec}\sum_{n\in\Nc_\e^\w(U)}\int_{\e\partial I_n^\w}(\nabla_E\bar u)\,\zeta_\e^\w\cdot\sigma\big(\e\psi_E^\w(\tfrac\cdot\e)+Ex,\Sigma_E^\w(\tfrac\cdot\e)\big)\nu\bigg|.
\end{eqnarray*}
\endgroup
We successively estimate these different terms.
First, the corrector result for the velocity field in Step~8.1 yields $T_{\e,1}^\w\to0$ for almost all $\w$.
We turn to the second term $T_{\e,2}^\w$.
In view of~\eqref{e.energy-estim} and of the boundedness statement of Proposition~\ref{prop:corr-Stokes}(iii), with the regularity of $\bar u$, we deduce that for almost all $\w$ the pressure $Q_\e^\w$ is bounded in $\Ld^2(U)$ uniformly in~$\e$, hence in view of~\eqref{eq:def-zeta-eps2} the test function $\zeta_\e^\w$ is bounded in $H^1_0(U)$.
For almost all~$\w$, by weak compactness, there exists $\bar\zeta^\w\in H^1_0(U)^d$ such that $\zeta_\e^\w\cvf\bar\zeta^\w$ in~$H^1_0(U)$ along some subsequence (not relabelled), hence also $\zeta_\e^\w\to\bar\zeta^\w$ in $\Ld^2(U)$ by Rellich's theorem. Combining this strong convergence with the ergodic theorem in form of $(\Sigma_E^\w\mathds1_{\R^d\setminus\e\Ic^\w})(\tfrac\cdot\e)\cvf0$ in $\Ld^2(U)$ (cf.~Proposition~\ref{prop:corr-Stokes}(iii)) and in form of $\mathds1_{\Ic_\e^\w(U)}\cvf\lambda\mathds1_U$ weakly-* in $\Ld^\infty(U)$, together with the regularity of $\bar u$, we deduce $T_{\e,2}^\w\to0$ for almost all $\w$.
Similarly, in view of the sublinearity of~$\psi_E$ (cf.~Proposition~\ref{prop:corr-Stokes}(ii)), we find $T_{\e,3}^\w\to0$.

\medskip\noindent
We turn to the boundary term $T_{\e,4}^\w$.
For $n\notin\Nc_\e^\w(U)$ with $\e I_n^\w\cap U\ne\varnothing$, since $\e\partial I_n^\w$ is at distance at most $\e$ from $\partial U$, on which $\zeta_\e^\w$ vanishes, we deduce from a trace estimate,
\[\|\zeta_\e^\w\|_{\Ld^2(\e\partial I_n^\w\cap U)}\,\lesssim\,\e^\frac12\|\nabla\zeta_\e^\w\|_{\Ld^2(\e (I_n^\w+ B)\cap U)},\]
hence by Cauchy-Schwarz' inequality,
\[T_{\e,4}^\w\,\lesssim\,\|\nabla\bar u\|_{\Ld^\infty(U)}\|\nabla\zeta_\e^\w\|_{\Ld^2(U)}\sum_{E\in\Ec}\Big(\sum_{n\notin\Nc_\e^\w(U)}\e\int_{\e\partial I_n^\w}\big|\sigma\big(\e\psi_E(\tfrac\cdot\e)+Ex,\Sigma_E^\w(\tfrac\cdot\e)\big)\big|^2\Big)^\frac12.\]
As in the proof of~\eqref{eq:est-sum-par-cor+}, we appeal to a trace estimate and to the regularity theory for the Stokes equation near a boundary in the form
\[\int_{\partial I_n^\w}|\nabla\psi_E^\w|^2+|\Sigma_E^\w|^2\,\lesssim\,\|(\nabla\psi_E^\w,\Sigma_E^\w)\|_{\Ld^2(I_n^\w+\frac\delta2 B)}^2,\]
so that the above yields
\[T_{\e,4}^\w\,\lesssim\,\|\nabla\bar u\|_{\Ld^\infty(U)}\|\nabla\zeta_\e^\w\|_{\Ld^2(U)}\sum_{E\in\Ec}\Big(\int_{(\partial U)+3\e B}|E|^2+|\nabla\psi_E^\w(\tfrac\cdot\e)|^2+|(\Sigma_E^\w\mathds1_{\R^d\setminus\Ic^\w})(\tfrac\cdot\e)|^2\Big)^\frac12,\]
where the right-hand side  converges to $0$ for almost all $\w$ as a consequence of the ergodic theorem, cf. Proposition~\ref{prop:corr-Stokes}(iii).

\medskip\noindent
It remains to estimate $T_{\e,5}^\w$, and we use the short-hand notation
\[J_\e^\w:=-\sum_{E\in\Ec}\sum_{n\in\Nc_\e(U)}\int_{\e\partial I_n^\w}(\nabla_E\bar u)\,\zeta_\e^\w\cdot\sigma\big(\e\psi_E^\w(\tfrac\cdot\e)+Ex,\Sigma_E^\w(\tfrac\cdot\e)\big)\nu.\]
As shown in Step~5, in view of the boundary conditions for $\psi_E$ at the inclusion boundaries, together with the regularity of $\bar u$, an approximation argument for $\nabla_E\bar u$ leads to
\begin{multline}\label{eq:approx-Jeps}
\lim_{\e\downarrow0}\bigg|J_\e^\w
+\sum_{E\in\Ec}\sum_{n\in\Nc_\e^\w(U)}\Big(\fint_{\e I_n^\w}\zeta_\e^\w\nabla_i\nabla_E\bar u\Big)\\\cdot\Big(\int_{\e\partial I_n^\w}(x-\e x_n^\w)_i\,\sigma\big(\e\psi_E^\w(\tfrac\cdot\e)+Ex,\Sigma_E^\w(\tfrac\cdot\e)\big)\nu\Big)\bigg|\,=\,0.
\end{multline}
Applying Cauchy-Schwarz' inequality in the form
\begin{multline*}
\sum_{n\notin\Nc_\e^\w(U)}\bigg|\Big(\fint_{\e I_n^\w}\zeta_\e^\w\nabla_i\nabla_E\bar u\Big)\cdot\Big(\int_{\e\partial I_n^\w}(x-\e x_n^\w)_i\,\sigma\big(\e\psi_E^\w(\tfrac\cdot\e)+Ex,\Sigma_E^\w(\tfrac\cdot\e)\big)\nu\Big)\bigg|\\
\lesssim\,\|\nabla^2\bar u\|_{\Ld^\infty(U)}\|\zeta_\e^\w\|_{\Ld^2(U)}\Big(\sum_{n\notin\Nc_\e^\w(U)}\e\int_{\e\partial I_n^\w}\big|\sigma\big(\e\psi_E^\w(\tfrac\cdot\e)+Ex,\Sigma_E^\w(\tfrac\cdot\e)\big)\big|^2\Big)^\frac12,
\end{multline*}
and noting that the estimate on $T_{\e,4}^\w$ above ensures that the right-hand side converges to~$0$, we deduce that the restriction to $n\in\Nc_\e^\w(U)$ can be removed from the sum in~\eqref{eq:approx-Jeps}. In terms of the random field $Z_E$ defined in~\eqref{eq:def-ZE}, we are thus led to
\begin{equation*}
\lim_{\e\downarrow0}\Big|J_\e^\w
-\sum_{E\in\Ec}\int_U(\zeta_\e^\w\otimes\nabla\nabla_E\bar u):Z_E^\w(\tfrac\cdot\e)\Big|\,=\,0.
\end{equation*}
Appealing to the ergodic theorem for $Z_E$, to the identification of $\expec{Z_E}$ in Step~6, and to the strong convergence $\zeta_\e^\w\to\bar\zeta^\w$ in $\Ld^2(U)$, together with the regularity of $\bar u$, we deduce
\begin{equation*}
\lim_{\e\downarrow0}\Big|J_\e^\w-\int_U\D(\bar\zeta^\w):2(\Bb-\Id)\D(\bar u)-\int_U(\Div\bar\zeta^\w)\,\bb:\D(\bar u)\Big|\,=\,0,
\end{equation*}
that is, $T_{\e,5}^\w\to0$.
We conclude that the whole right-hand side in~\eqref{eq:bound-press-q} converges to $0$ for almost all~$\w$, and the corrector result follows.

\medskip
\substep{8.4} Relaxing the regularity assumption.\\
Assume that $f\in\Ld^p(U)$ for some $p>d$ and note that in view of the regularity theory for the homogenized Stokes equation~\eqref{eq:Stokes-hom} in form of~\cite[Lemma~IV.6.1]{Galdi} this implies $\bar u\in W^{2,p}_0(U)^d$ and $\bar P\in W^{1,p}(U)$. Choosing an approximating sequence $(f^r)_r\subset C^\infty_b(U)$ with $f^r\to f$ in $\Ld^p(U)$ as $r\downarrow0$, we deduce by linearity that the corresponding solution $(\bar u^r,\bar P^r)$ of the homogenized equation satisfies $\bar u^r\to\bar u$ in $W^{2,p}(U)$, hence $\bar u^r\to\bar u$ in $W^{1,\infty}\cap W^{2,d}(U)$ by the Sobolev embedding.
In addition, in view of the energy estimate~\eqref{e.energy-estim}, the corresponding solution $(u_\e^{r,\w},P_\e^{r,\w})$ of~\eqref{eq:Stokes} satisfies
\begin{equation}\label{eq:conv-approx}
\sup_{\e>0}\int_U|\nabla(u_\e^{r,\w}-u_\e^\w)|^2+\sup_{\e>0}\int_{U\setminus\Ic_\e^\w(U)}|P_\e^{r,\w}-P_\e^\w|^2\,\lesssim\,\int_U|f^r-f|^2\,\to\,0,
\end{equation}
as $r\downarrow0$.
Since for fixed $r>0$ the approximation $f^r$ is smooth, the regularity theory for the homogenized Stokes equation ensures that $\bar u^r$ belongs at least to $W^{3,\infty}(U)^d$, hence the above Steps~8.1--8.3 show that the corrector results indeed hold for the $r$-approximations. Since $\nabla\psi_E^\w(\tfrac\cdot\e)$ and $(\Sigma_E^\w\mathds1_{\R^d\setminus\Ic^\w})(\tfrac\cdot\e)$ are bounded in $\Ld^2(U)$ for almost all $\w$ (cf.~Proposition~\ref{prop:corr-Stokes}(iii)), and since the Sobolev embedding also ensures the boundedness of $\e\psi_E^\w(\tfrac\cdot\e)$ in $\Ld^{2d/(d-2)}(U)$, the above convergences precisely allow to get rid of approximations.

\medskip\noindent
We conclude with the argument for the weak convergence of the pressure for $f \in \Ld^2(U)$. Choose an approximating sequence $(f^r)_r\subset C^\infty_b(U)$ with $f^r\to f$ in $\Ld^2(U)$, and denote by $(u_\e^{r,\w},P_\e^{r,\w})$ and by $(\bar u^r,\bar P^r)$ the corresponding solutions of~\eqref{eq:Stokes} and of~\eqref{eq:Stokes-hom}. Starting from the corrector result for the pressure for the regularized data $f^r$, appealing to the ergodic theorem of Proposition~\ref{prop:corr-Stokes}(iii), and noting that the weak-* convergence $\mathds1_{U\setminus\Ic_\e^\w(U)}\cvf1-\lambda$ in~$\Ld^\infty(U)$ entails $\int_U(\bar P^r+\bb:\D(\bar u^r))\mathds1_{U\setminus\Ic_\e^\w(U)}\to(1-\lambda)\int_U(\bar P^r+\bb:\D(\bar u^r))=0$, we obtain for all $r$, for almost all $\w$,
\[(P_\e^{r,\w}-\bar P^r-\bb:\D(\bar u^r))\mathds1_{U\setminus\Ic_\e^\w(U)}~\cvf~0,\quad\text{weakly in $\Ld^2(U)$,\quad as $\e\downarrow0$}.\]
Next, the same argument as above yields $(\bar u^r,\bar P^r)\to(\bar u,\bar P)$ in $H^1_0(U)\times\Ld^2(U)$, as well as~\eqref{eq:conv-approx}, which allows to get rid of approximations in this weak convergence result.
\qed

\section*{Acknowledgements}
MD acknowledges financial support from the CNRS-Momentum program,
and AG from the European Research Council under the European Community's Seventh Framework Programme (FP7/2014-2019 Grant Agreement \textsc{quanthom} 335410).

\bibliographystyle{plain}

\def\cprime{$'$} \def\cprime{$'$} \def\cprime{$'$}


\end{document}